\documentclass[12pt,english,reqno]{amsart}

\usepackage{geometry}
\usepackage{amssymb,amsthm,amsmath}
\usepackage[dvips]{graphicx}
\usepackage{color}
\usepackage{enumerate}
\usepackage{comment}
\usepackage{textcomp}
\usepackage{bbm}
\usepackage[svgnames]{xcolor}
\usepackage{tikz}
\usetikzlibrary{decorations.pathreplacing, positioning}
\usetikzlibrary{shapes.geometric, arrows}
\usepackage{caption}
\usepackage[bookmarksnumbered=true]{hyperref} 
\hypersetup{
	colorlinks = true,
	linkcolor = Blue,
	anchorcolor = blue,
	citecolor = Green,
	filecolor = blue,
	urlcolor = FireBrick
}

\newcommand{\supp}{\mathop{\mathrm{supp}}}
\newcommand{\C}{\mathbb C}

\newcommand{\eps}{\varepsilon}

\newcommand{\R}{{\mathbb R}}

\renewcommand{\H}{{\mathbb H}}
\newcommand{\la}{\langle}
\newcommand{\ra}{\rangle}

\newcommand{\N}{{\mathbb N}}
\newcommand{\G}{{\mathcal G}}
\newcommand{\Z}{{\mathbb Z}}

\renewcommand{\S}{{\mathbb S}}

\newcommand{\bfd}{\mathbf{d}}
\renewcommand{\d}{\partial}

\newtheorem{theorem}{Theorem}
\newtheorem{lemma}[theorem]{Lemma}
\newtheorem{proposition}[theorem]{Proposition}

\theoremstyle{definition}
\newtheorem{definition}[theorem]{Definition}
\newtheorem{remark}[theorem]{Remark}

\begin{document}

\title[DN Neural operators]{Quantitative approximation theorem of neural operators for the Dirichlet-to-Neumann map}
\date{}

\author[Kow]{Pu-Yun Kow}
\address{Department of Artificial Intelligence, Tamkang University,  New Taipei City 251301, Taiwan.}
\email{169016@o365.tku.edu.tw}
\author[Aspri]{Andrea Aspri}
\address{Mathematics Department “F. Enriques”, Universit\`a degli Studi di Milano, Milano, Italy}
\email{andrea.aspri@unimi.it}
\author[Francini]{Elisa Francini}
\address{Dipartimento di Matematica e Informatica ''Ulisse Dini'', Universit\`a degli Studi di Firenze, Firenze, Italy}
\email{elisa.francini@unifi.it}
\author[Wang]{Jenn-Nan Wang}
\address{Institute of Applied Mathematical Sciences, National Taiwan University, Taipei 106, Taiwan.}
\email{jnwang@math.ntu.edu.tw}

\begin{abstract} 
In this paper, we study neural operators for the Dirichet-to-Neumann map (DN map), which is encountered in the famous Calder\'on's problem. In many numerical methods used for reconstructing the unknown conductivity from the knowledge of the DN map need to evaluate the corresponding DN map at a given conductivity function, which involves solving the boundary value problem for the conductivity equation and taking the Neumann data. Such procedure is time consuming and not effective. Hence, it is a favorable strategy to construct a surrogate to replace the forward map mapping conductivity functions to corresponding DN maps. The main theme of this paper is to derive quantitative bounds on the parametric complexity of these neural operators. In particular, under suitable assumptions, their depth grows at most logarithmically with $\varepsilon^{-1}$, while the relevant spectral and projection ranks grow algebraically. It should be noted that general neural operators suffer from "the curse of parametric dimensionality". This work demonstrates the construction of neural operators for the DN map that does not suffer from such curse.   
\end{abstract}

\subjclass[2020]{Primary: 68T07, 35R30. Secondary: 65N21.}
\keywords{Neural operators, Dirichlet-to-Neumann map, Calder\'on's problem, Parametric complexity} 

\maketitle

\tableofcontents

\begin{sloppypar}

\section{Introduction}\label{Introduction}

Nonlinear operators mapping from a function space to another function space are ubiquitous in science. One huge class of such nonlinear operators arise from PDEs, either direct or inverse problems. Recently, neural network architectures aiming to learn nonlinear operators have received a lot of attentions. Such network architectures are termed "neural operators", which are powerful extensions of traditional neural networks from finite-dimensional spaces to infinite-dimensional ones, see, for example, \cite{anandkumar2020neural, bhattacharya2021model, kovachki2023neural, lu2019deeponet} to name a few. One useful domain of using neural operators is to solve problems arising from PDEs. While traditional PDEs solvers often require substantial computational resources and time due to high dimensionality or nonlinearity, once trained, neural operators serve as surrogate models, providing significantly faster inference compared to traditional numerical solvers. 

A fundamental question for any neural network architecture is the universal approximation property, which ensures the applicability of neural network architectures. Early important results could be found in \cite{cybenko1989approximation} (for functions) and  \cite{chen1995approximation} (for functionals and operators). For neural operators, universality has been established in a variety of architectures, e.g.,  for DeepONets \cite{lu2021learning, lanthaler2022error}, for FNOs \cite{kovachki2021universal}, and for ANOs \cite{lanthaler2023nonlocal}. Since neural operators map deal with infinite-dimensional spaces, achieving manageable amount of learnable parameters is crucial in practice. However, it is known that general neural operators, which are characterized only by their $C^r$- or Lipschitz regularity, suffer from the "curse of parametric complexity" \cite{lanthaler2023curse}. In this work, we show that the general curse can be avoided for the DN map under certain conditions. We believe that such result is useful in solving Calder\'on's problem in practice.

\subsection{Related works}\label{sub:1.1}

Our construction of neural operators for the DN map is similar to those introduced in \cite{kovachki2023neural}. A distinctive feature of neural operators introduced there is the discretization-invariant property. Some neural operator architectures related to \cite{kovachki2023neural} have been proposed, differing primarily in the choice of basis functions within non-local operators, for example, Graph Neural Operators \cite{li2020neural}, Fourier Neural Operators \cite{li2020fourier}, Wavelet Neural Operators \cite{tripura2022wavelet, gupta2021multiwavelet}, Spherical Fourier Neural Operators \cite{bonev2023spherical}, and Laplace Neural Operators \cite{chen2023learning}, etc. Other notable neural networks for learning operators were already introduced earlier such as DeepONet \cite{chen1995universal, lu2019deeponet} and PCA-Net \cite{bhattacharya2021model}, \cite{de2022cost}. These architectures have demonstrated empirical success as the surrogate models of simulators across a wide range of problems in PDEs, see \cite{takamoto2022pdebench}. Even in Calder\'on's problem, neural operators based DeepONets are rigorously shown to be good surrogators of the forward and inverse Calder\'on maps (DN map) \cite{castro2024calderon}. One of critical theoretical foundations of neural operators is the universal approximation theorems. This theorem establishes that, for any target operator and desired level of accuracy, there exists a neural operator with sufficiently large learnable parameters such that it approximates the target operators with desired accuracy.
This indicates the capacity of the class of neural operators to approximate a wide range of operators. 
Universal approximation theorems for operator learning were established for neural operators \cite{kovachki2021universal, kovachki2023neural, lanthaler2023nonlocal, kratsios2024mixture}, DeepONet \cite{lu2019deeponet, lanthaler2022error}, and PCA-Net \cite{lanthaler2023operator}. A nice summary about recent progress on neural operators is given \cite{kovachki2024operator}. 

Our objective here is to establish the quantitative universal approximation theorem, which provides the upper bounds for the numbers of learnable parameters required to achieve a given level of approximation accuracy. To put our study in perspective, as already mentioned above, \cite{lanthaler2023curse} discovered that the operator learning for general operators suffers from “the curse of parametric complexity” in which the size of learnable parameters grows exponentially as the desired approximation accuracy increases. This unfavorable complexity may make  neural operators unpractical in solving real problems. However, when we restrict general operators to some operators in some specific PDEs, 
quantitative approximation theorems having manageable number of learnable parameters without experiencing exponential growth in model complexity have been established recently. For instance, \cite{kovachki2021universal, lanthaler2023operator} developed quantitative approximation theorems for the Darcy and Navier-Stokes equations using Fourier neural operators and PCA-Net, respectively. Additionally, \cite{lanthaler2023curse} analyzed the Hamilton-Jacobi equations with Hamilton-Jacobi neural operators, while \cite{furuya2024quantitative} addressed nonlinear parabolic equations. In \cite{scaop2025}, neural operators for the scattering problem (mapping from the refractive index to the far-field pattern) was constructed and a quantitative approximation theorem was proved as well. On the other hand, results in \cite{chen2023deep, lanthaler2022error, marcati2023exponential}, considered quantitative approximation theorems using DeepONet for a wide range of PDEs, including elliptic, parabolic, and hyperbolic equations, while \cite{deng2021convergence} focused on advection-diffusion equations.

\subsection{Our results and contributions}\label{sub:1.2}

Here, inspired by the study of Calder\'on's problem, we investigate neural operators for the DN map. Precisely, we aim to construct surrogates for the map sending the conductivity function to the corresponding DN map, that is, the map $\Lambda:\gamma\to\Lambda_\gamma$. In the general setting, for a bounded smooth domain $\Omega\subset\R^d$ with $d\ge 2$, if 
\begin{equation}\label{condr}
\gamma\in L^\infty(\Omega)\quad\mbox{and}\quad\gamma(x)\ge c>0\quad \mbox{for a.e.}\;\; x\in\Omega,
\end{equation}
then $\Lambda_\gamma:H^{1/2}(\partial\Omega)\to H^{-1/2}(\partial\Omega)$. In other words, we would like to learn the map $\Lambda$. Unlike neural operators previously studied, the range of $\Lambda$ is a space of \emph{operators} acting on Sobolev spaces. It is important to keep in mind that neural operators constructed here are independent of the discretization or mesh used to represent $\gamma$ and any Dirichlet data feeding into the DN map $\Lambda_\gamma$, i.e., $\Lambda_\gamma(f)\in H^{-1/2}(\partial\Omega)$ for $f\in H^{1/2}(\partial\Omega)$. Since the Sobolev space $H^{-1/2}(\partial\Omega)$ consists of distributions, taking its pointwise values on $\partial\Omega$, that is, $\Lambda_\gamma(f)(y)$ for $y\in\partial\Omega$, does not make sense. Therefore, the training of our neural operators needs special care. 

The focus of this work is the quantitative approximation theorem in which we are interested in deriving upper bounds of the parametric complexity of our neural operators with respect to any given approximation error. The parametric complexity quantities are the depth of the layer, the width of each layer, and the rank of the low-rank approximation, etc. 
In addition to the investigation of the approximation error, we will also study the generalization error. Our neural operators demonstrate a small generalization error so that they can be generalized well to unseen data. The first idea of the proof is to project the DN map onto a finite-dimensional subspace \cite[Lemma~4]{abraham2020statistical}. It is important to note that such projection incurs errors that can be estimated in terms of the rank of the projection \cite[Lemma~4]{abraham2020statistical}. This fact is useful in deriving quantitative approximation theorems. So to achieve our goal, it suffices to construct neural approximators to low-rank approximations of the associated DN map. Another idea, which is borrowed from \cite{furuya2024quantitative, scaop2025}, is that we can construct solutions to the conductivity equation iteratively and have a good control of construction error rigorously under certain conditions. From here, we can estimate upper bounds of neural operators' depth. It turns out that our neural operators do not suffer from the \emph{curse of dimensionality}.  

We want to remark that neural operators for the direct and inverse Calder\'on’s maps have been discussed in \cite{castro2024calderon}. Different from the neural operators studied in this paper, \cite{castro2024calderon} is based on DeepONets. In \cite{castro2024calderon}, "qualitative" approximation theorems for neural operators were rigorously proved. However, upper bounds of parametric complexity of neural operators were not investigated there. We also refer the reader to \cite{abhishek2024deeponet} for another interesting study on the application of DeepONets to Calderón’s problem. A complementary approach to operator learning for Calder\'on's problem is developed by \cite{dHKLN25}. They extend the inverse map, expressed in terms of integral kernels of Neumann-to-Dirichlet operators, to a Hilbert space of boundary kernels while preserving its modulus of continuity.
This extension provides a framework for Fourier neural operator
approximation with error bounds accounting for perturbations of the
boundary data. Their analysis concerns the inverse map, whereas
the present work focuses on quantitative bounds on the parametric
complexity of neural operator approximations of the forward map
$\gamma \mapsto \Lambda_\gamma$.

On the other hand, like the discretization-invariance property of neural operators given in \cite{kovachki2023neural}, neural operators we constructed here also satisfy this property. Therefore, once our neural operators are trained, normally, at a coarse grid, they can be used for all grids. In other words, our neural operators have the "zero-shot super-resolution" phenomenon, see \cite[Page~42]{kovachki2023neural}. To demonstrate the efficiency of the DN neural operators constructed herein, we apply the Ensemble Kalman Inversion method to the inverse problem of reconstructing the conductivity from the DN map. 

A closely related line of research is the work of Alberti and Santacesaria on Calder\'on's problem with finitely many measurements 
\cite{AlbSant1, AlbSant2, AlbSant3, AlbSant4}. Their results establish uniqueness, Lipschitz stability, and 
reconstruction for the conductivity when the unknown is assumed to belong to a finite-dimensional manifold or subspace, and when only a 
finite number of boundary measurements of the DN map is available. While their approach is fully analytical, our work focuses on the 
construction and analysis of neural operators as surrogate models for the DN map.

\section{The Dirichlet-to-Neumann map}

In this section, we review some theoretical foundations of the DN map. Without loss of generality, we assume that $\Omega=B$, a ball centered at the origin. Consider the boundary value problem for the conductivity equation 
\begin{equation}\label{cond}
\left\{
\begin{aligned}
&\nabla\cdot(\gamma\nabla u)=0\quad\mbox{in}\quad B,\\
&u=f\quad\mbox{on}\quad\partial B,
\end{aligned}\right.
\end{equation}
where the conductivity function $\gamma$ satisfies \eqref{condr}. We define the Dirichet-to-Neumann map $\Lambda_\gamma$ associated with \eqref{cond}, i.e., $\Lambda_\gamma:H^{1/2}(\partial B)\mapsto H^{-1/2}(\partial B)$ with
\[
\Lambda_\gamma(f)=\frac{\partial u_f}{\partial\nu}\Big{|}_{\partial B}
\]
where $u_f$ the solution of \eqref{cond} with boundary value $f$. It was known that the boundary value of $\gamma$ (including derivatives on the boundary whenever they exist) can be determined from $\Lambda_\gamma$, see for example, \cite[Chapter~3]{feldman2019calderon}. Therefore, for simplicity, we assume that $\supp(\gamma-1)\subset B$. It is convenient to consider 
\[
\tilde\Lambda_\gamma:=\Lambda_\gamma-\Lambda_1. 
\]
The famous Calder\'on's problem is to retrieve the conductivity $\gamma$ from $\Lambda_\gamma$ or $\tilde\Lambda_\gamma$. Calder\'on's problem is also known as the electric impedance tomography problem (EIT).  Here we are interested in constructing neural operators for $\Lambda:\gamma\mapsto\tilde\Lambda_\gamma$, which is an operator mapping from a function space to a space of $H^{1/2}(\partial B)-H^{-1/2}(\partial B)$ linear operators. 

\subsection{Low rank approximation}\label{sec2.1}

Starting from the results and ideas in \cite{abraham2020statistical}, which show that the operator $\tilde{\Lambda}_\gamma$ can be projected onto a finite-dimensional subspace while incurring only a small error, the goal of this section is to approximate this reduced model by means of neural operators. To begin,
%We would like to discuss a low rank approximation of $\tilde\Lambda_\gamma$ whose the aim of approximating such low rank approximation by neural operators. We will borrow ideas from \cite{abraham2020statistical}. 
denote $\{\phi_k=\phi_k^{(0)}: k\in \N\cup\{0\}\}$ be an orthonormal basis of $L^2(\partial B)$ consisting of real-valued eigenfucntions of the Laplace-Beltrami operator $\Delta_{\partial B}$. Let $\lambda_k$ be the corresponding eigenvalues:
\[
-\Delta_{\partial B}\phi_k=\lambda_k\phi_k.
\]
We now define $H^r(\partial B)$ for $r\ge 0$ as
\[
H^r(\partial B)=\{f\in L^2(\partial B)\;\mbox{such that}\; \sum_{k=0}^\infty(1+\lambda_k)^r|\la f,\phi_k\ra_{L^2(\partial B)}|^2:=\|f\|_{H^r(\partial B)}^2<\infty\},
\]
If $f, g\in H^r(\partial B)$, the inner product of $\la f,g\ra_{H^r(\partial B)}$ is defined as 
\[
\la f,g\ra_{H^r(\partial B)}=\sum_{k=0}^\infty(1+\lambda_k)^r\la f,\phi_k\ra_{L^2(\partial B)}\la g,\phi_k\ra_{L^2(\partial B)}.
\]
Therefore, $\{\phi_k^{(r)}:=(1+\lambda_k)^{-r/2}\phi_k: k\in\N\cup\{0\}\}$ forms an orthonormal basis of $H^r(\partial B)$. For $r<0$, $H^r(\partial B)$ is defined as the completion of $L^2(\partial B)$ with the norm $\|\cdot\|_{H^r(\partial B)}$ (now using $r<0$). The parameter $r$, which is at our disposal, reflects how the signal-to-noise ratio varies with frequency, see comments on \cite[Page 170]{abraham2020statistical}. 

Note that $\phi_0$ is a constant function. We therefore define the $H^r(\partial B)/\C$ norm by $\|[f]\|_{H^r(\partial B)/\C}=\inf_{z\in\C}\|f-z\|$. For $[f]$, the equivalence class over $\C$ of a function $f\in H^r(\partial B)$, can be characterized as 
\[
\|[f]\|_{H^r(\partial B)/\C}=\sum_{k=1}^\infty(1+\lambda_k)^r|\la f,\phi_k\ra_{L^2(\partial B)}|^2.
\]
On the other hand, let $H^s_\diamond(\partial B):=\{g\in H^s(\partial B): \la g,1\ra_{L^2(\partial B)}=0\}$, i.e., $H^s_\diamond(\partial B)$ is the orthogonal complement of the constant, then each $[f]\in H^s(\partial B)/\C$ has a representative $g\in H^s_\diamond(\partial B)$ with $\|f\|_{H^s(\partial B)/\C}=\|g\|_{H^s(\partial B)}$. Using these function spaces, the DN map can be defined in a more general setting
\[
\Lambda_\gamma: H^{1+s}(\partial B)/\C\mapsto H^s_\diamond(\partial B).
\]
Let $A$ and $B$ be separable Hilbert spaces. Denote ${\mathcal L}(A,B)$ the space of bounded operators $A\to B$ and ${\mathcal L}_2(A,B)$ the space of Hilbert-Schmidt operators $A\to B$. Define the orthonormal basis $(b_{jk}^{(p,q)})$ of ${\mathcal L}_2(H^p, H^q)$ by
\[
b_{jk}^{(p,q)}(f)=\phi_j^{(p)}\otimes\phi_k^{(q)}(f)=\la f,\phi_j^{(p)}\ra_{H^p}\phi_k^{(q)},\quad j,k\in\N.
\]
From \cite[Lemma 18]{abraham2020statistical}, for $p, q, r, s\in\R$ satisfying $p\le r$ and $q\ge s$, let $T\in {\mathcal L}(H^{p-(d-1)}, H^q)$, then one can show that $T\in{\mathcal L}_2(H^r,H^s)$ and for some constant $C$, depending only on $B$ and the difference $r-p, q-s$, such that
\begin{equation*}%\label{est0}
\|T\|_{{\mathcal L}_2(H^r,H^s)}\le C\|T\|_{{\mathcal L}(H^{p-(d-1)}, H^q)},
\end{equation*}
and moreover
\begin{equation}\label{est1}
\|T-\pi_{J,K}T\|_{{\mathcal L}_2(H^r,H^s)}\le C\|T\|_{{\mathcal L}(H^{p-(d-1)}, H^q)}\max\{(1+J^{1/(d-1)})^{p-r}, (1+K^{1/(d-1)})^{s-q}\},
\end{equation}
where
\begin{equation}\label{pjkt}
\pi_{J,K}T=\sum_{1\le j\le J}\sum_{1\le k\le K}\la T\phi_j^{(r)},\phi_k^{(s)}\ra_{H^s(\partial B)}b_{jk}^{(r,s)}.
\end{equation}

Here we will consider a more specific situation where ${p\le r, s=0}$ and denote $b_{jk}^{(r)}:=b_{jk}^{(r,0)}:H^r(\partial B)\to L^2(\partial B)$ the tensor product operator 
\[
b_{jk}^{(r)}(f)=\phi_j^{(r)}\otimes\phi_k^{(0)}(f)=\la \phi_j^{(r)},f\ra_{H^r(\partial B)}\phi_k^{(0)},\;\; f\in H^r(\partial B)/\C.
\]
We then define the space of Hilbert-Schmidt operators
\[
\H_r=\{T:H^r(\partial B)\to L^2(\partial B): T=\sum_{j,k=1}^\infty t_{jk}b_{jk}^{(r)}, t_{jk}\in\R, \sum_{j,k=1}^\infty t_{jk}^2<\infty\}.
\]
For $T\in \H_r$, we define a low rank approximation of $T$ as in \eqref{pjkt}, i.e.,
\begin{equation}\label{pjkr0}
\pi_{JK}T=\sum_{1\le j\le J}\sum_{1\le k\le K}\la T\phi_j^{(r)},\phi_k^{(0)}\ra_{L^2(\partial B)}b_{jk}^{(r)}.
\end{equation}
Let $m\in(0,1)$ and $B'$ be compactly contained in $B$. We introduce the parameter space
\[
S_{m,B'}=\{\gamma\in C(B): \inf_{x\in B}\gamma(x)\ge m,\;\gamma=1\;\mbox{on}\; B\setminus B'\}.
\]
Recall from \cite[Lemma 4]{abraham2020statistical} that if $\gamma\in S_{m,B'}$ and it is bounded by $M$ in $B$, then for any $\nu>0$ there exists a constant $C=C(\nu,B,B',r)>0$ such that
\begin{equation}\label{est2}
\|\tilde\Lambda_\gamma-\pi_{JK}\tilde\Lambda_{\gamma}\|_{\H_r}\le C\frac{M}{m}\min\{J,K\}^{-\nu}. 
\end{equation}
To derive \eqref{est2}, we use \eqref{est1} with $s=0$, $p=r-\nu(d-1)$, $q=\nu(d-1)$ and note that $\|\tilde\Lambda_\gamma\|_{H^{p-(d-1)}\to H^q}\le C\frac{M}{m}$. Without loss of generality, we assume $J\le K$ and choose $\nu>1$. Then if we want 
\begin{equation}\label{error1}
\|\tilde\Lambda_\gamma-\pi_{JK}\tilde\Lambda_{\gamma}\|_{\H_r}\le\eps,
\end{equation}
then it suffices to take 
\begin{equation}\label{J}
J=\left\lceil\left(C\frac Mm\frac{1}{\eps}\right)^{{1}/{\nu}}\right\rceil.
\end{equation}
Roughly speaking, $J$ increases sub-polynomially with respect to $\eps^{-1}$. In what follows, we choose $\nu=2$ and 
\begin{equation}\label{JJ}
J\sim\eps^{-1/2}.
\end{equation}

\subsection{Coefficients of low rank approximation}\label{sec:coeff} In view of \eqref{error1}, it is enough to construct neural operators approximating the low rank approximation $\pi_{JK}\tilde\Lambda_\gamma$. As shown in Section~2.1, for any $J\in\mathbb{N}$ the normalized DN map
$\tilde\Lambda_\gamma : H^{r}(\partial B)\to L^{2}(\partial B)$ admits the 
low-rank approximation
\[
\pi_{JK}\tilde\Lambda_\gamma
  = \sum_{j=1}^{J}\sum_{k=1}^{K}
    \big\langle \tilde\Lambda_\gamma\phi^{(r)}_{j},
    \phi^{(0)}_{k} \big\rangle_{L^{2}(\partial B)}\, b^{(r)}_{jk},
\]
where $\{\phi^{(r)}_{j}\}_{j\ge 0}$, $\{\phi^{(0)}_{k}\}_{k\ge 0}$ and $b^{(r)}_{jk}$ are the orthonormal 
bases and the tensor product operator introduced in Section~2.1, respectively.
Hence, in order to approximate $\tilde\Lambda_\gamma$ it is enough to approximate 
its coefficients
\begin{equation}\label{eq:piLambda}
c^{(r)}_{jk}(\gamma)
  := \big\langle \tilde\Lambda_\gamma\phi^{(r)}_{j},
     \phi^{(0)}_{k}\big\rangle_{L^{2}(\partial B)}.
\end{equation}
The goal of this subsection is therefore to compute these coefficients in a form 
suitable for neural network approximation. By using the representation of the DN map 
and Green's identity, 
\[
\begin{aligned}
c_{jk}^{(r)}(\gamma)&=\la \tilde\Lambda_\gamma\phi_j^{(r)},\phi_k^{(0)}\ra_{L^2(\partial B)}\\
&=\int_{\partial B}\phi_k^{(0)}(\Lambda_\gamma-\Lambda_1)\phi_j^{(r)}d\sigma\\
&=\int_B(\gamma-1)\nabla u^{(r)}_j\cdot\nabla v^{(0)}_k dx
\end{aligned}
\]
for $1\le j\le J,\, 1\le k\le K$, where $u_j^{(r)}$ and $v_k^{(0)}$ are solutions to
\begin{equation}\label{ujr}
\nabla\cdot(\gamma\nabla u_j^{(r)})=0\;\;\mbox{in}\;\; B,\quad u_j^{(r)}=\phi_j^{(r)}\;\;\mbox{on}\;\; \partial B,
\end{equation}
and
\begin{equation}\label{uk0}
\Delta v_k^{(0)}=0\;\;\mbox{in}\;\; B,\quad v_k^{(0)}=\phi_k^{(0)}\;\;\mbox{on}\;\; \partial B,
\end{equation}
respectively. Observe that if $u_j^{(0)}$ satisfies \eqref{ujr} for $r=0$, then $u_j^{(r)}=(1+\lambda_j)^{-r/2}u_j^{(0)}$. Here our aim is to construct an ML model to learn the operator 
\begin{equation}\label{dncoeff}
\gamma\mapsto c_{jk}^{(r)}(\gamma)=\int_B(\gamma-1)\nabla u^{(r)}_j\cdot\nabla v^{(0)}_k dx.
\end{equation}

To begin, we assume that $\gamma$ has a better regularity, say $\gamma\in W^{2,\infty}(B)$ (thus $\Delta\gamma\in L^\infty(B)$), and positivity of $\gamma$ remains true (see \eqref{condr}). Let $q=\frac{\Delta\sqrt{\gamma}}{\sqrt{\gamma}}$ and $w$ be the solution of 
\begin{equation}\label{wq}
\Delta w-q w=0\;\;\mbox{in}\;\; B,\;\; w=\sqrt{\gamma}\phi_j^{(r)}=\phi_j^{(r)}\;\;\mbox{on}\;\;\partial B.
\end{equation}
Then, let $u=\gamma^{-1/2}w$, $u$ solves the conductivity equation with boundary value $\phi_j^{(r)}$ on $\partial B$. As above, it suffices to consider \eqref{wq} for $r=0$. Now we let $v^{(0)}_j$ be the harmonic function in $B$ having boundary value $\phi_j^{(0)}$ on $\partial B$ (see \eqref{uk0}), then writing $w=w_j^{(0)}+v^{(0)}_j$, we see that $w_j^{(0)}$ satisfies
\begin{equation}\label{wqj}
\Delta z-qz=qv^{(0)}_j\;\;\mbox{in}\;\; B,\;\; z=0\;\;\mbox{on}\;\;\partial B,
\end{equation} 
and set $w_j^{(r)}=(1+\lambda_j)^{-r/2}w_j^{(0)}$ and $v_j^{(r)}=(1+\lambda_j)^{-r/2}v_j^{(0)}$. Thus, $u_j^{(r)}=\gamma^{-1/2}(w_j^{(r)}+v_j^{(r)})$. It is more convenient to split the coefficient \eqref{dncoeff} into two terms, i.e.,
\begin{equation}\label{dnmc}
\begin{aligned}
&\int_B(\gamma-1)\nabla u^{(r)}_j\cdot\nabla v^{(0)}_k dx\\
=&\int_B(\gamma-1)\nabla(\gamma^{-1/2} w^{(r)}_j)\cdot\nabla v^{(0)}_k dx+\int_B(\gamma-1)\nabla(\gamma^{-1/2} v^{(r)}_j)\cdot\nabla v^{(0)}_k dx.
\end{aligned}
\end{equation}

We can denote the second term on the right hand side of \eqref{dnmc} as a map $\gamma\mapsto a_{jk}^{(r)}(\gamma)$ defined by
\begin{equation}\label{bjk}
\begin{aligned}
a_{jk}^{(r)}(\gamma)&=\int_B(\gamma-1)\nabla(\gamma^{-1/2} v^{(r)}_j)\cdot\nabla v^{(0)}_k dx\\
&=-\frac 12\int_B(\gamma-1)\gamma^{-3/2}v_j^{(r)}\nabla\gamma\cdot\nabla v^{(0)}_k dx+\int_B(\gamma-1)\gamma^{-1/2}\nabla v_j^{(r)}\cdot\nabla v_k^{(0)}dx\\
&=-\int_B(\nabla\gamma^{1/2}+\nabla\gamma^{-1/2})v_j^{(r)}\cdot\nabla v^{(0)}_k dx+\int_B(\gamma^{1/2}-\gamma^{-1/2})\nabla v_j^{(r)}\cdot\nabla v_k^{(0)}dx\\
&=-(1+\lambda_j)^{-r/2}\int_B(\nabla\gamma^{1/2}+\nabla\gamma^{-1/2})v_j^{(0)}\cdot\nabla v^{(0)}_k dx\\
&\qquad+(1+\lambda_j)^{-r/2}\int_B(\gamma^{1/2}-\gamma^{-1/2})\nabla v_j^{(0)}\cdot\nabla v_k^{(0)}dx.
\end{aligned}
\end{equation}
Our task now is to find an approximation of $a_{jk}^{(r)}$ within  any desired accuracy. First of all, let us denote $\gamma_1=\gamma^{1/2}$ and $\gamma_2=\gamma^{-1/2}$, which are considered as input functions. On the other hand, in machine learning, derivatives of a function can be computed exactly using {\bf Automatic Differentiation (AutoDiff)} based on the chain rule. Therefore, the vector-valued functions $\nabla\gamma^{1/2}=\nabla\gamma_1$ and $\nabla\gamma^{-1/2}=\nabla\gamma_2$ can be computed in the \emph{input layer} using {\bf AutoDiff}. 

Let $\{\Phi_n\}_{n=1}^\infty$ be an orthonormal basis of $L^2(B)$, consisting of the Dirichlet eigenfunctions $\{\Phi_n\}_{n=1}^\infty$ of $-\Delta$ in $B$, see Appendix~\ref{A1} for an explicit construction of such a basis. Let $\{0<\mu_1<\mu_2\le\mu_3\le\cdots\}$ be the corresponding eigenvalue of $\Phi_n$, i.e., $-\Delta\Phi_n=\mu_n\Phi_n$ in $B$ and $\Phi_n|_{\partial B}=0$. By Weyl's asymptotic formula, $\mu_n\simeq n^{2/d}$. For any $f(x)\in L^2(B)$, we can write
\[
f=\sum_{n=1}^\infty\la f,\Phi_n\ra_{L^2(B)}\Phi_n.
\]
In terms of Weyl's asymptotic formula, we have that if $f\in H^s(B)$, then
\begin{equation}\label{hsnorm}
\sum_{n=1}^\infty n^{\frac{2s}{d}}|\la f,\Phi_n\ra_{L^2(B)}|^2<\infty.
\end{equation}
With the help of \eqref{hsnorm}, for any $\delta>0$, there exists $I=I(\delta)\in\mathbb{N}$ satisfying
\begin{equation}\label{delta1}
I(\delta)=O(\delta^{-{d}/{s}})
\end{equation}
such that
\begin{equation}\label{in}
\Big{\|}f-\sum_{n=1}^I\la f,\Phi_n\ra_{L^2(B)}\Phi_n\Big{\|}_{L^2(B)}<\delta,
\end{equation}
where the constant in $I(\delta)$ of \eqref{delta1} depends on the size of $\|f\|_{H^s(B)}$, which is uniform in any fixed ball in $H^s(B)$. Indeed, we can write
\[
\begin{aligned}
\Big{\|}f-\sum_{n=1}^I\la f,\Phi_n\ra_{L^2(B)}\Phi_n\Big{\|}^2_{L^2(B)}&=\sum_{n=I+1}^\infty|\la f,\Phi_n\ra_{L^2(B)}|^2\\
&\le (I+1)^{-\frac{2s}{d}}\sum_{n=I+1}^\infty n^{\frac{2s}{d}}|\la f,\Phi_n\ra_{L^2(B)}|^2\\
&\le (I+1)^{-\frac{2s}{d}}\|f\|^2_{H^s(B)}
\end{aligned}
\]  
which gives \eqref{in} by choosing $I$ satisfying \eqref{delta1}.

We now prove the following error estimate.
\begin{proposition}\label{prop0}
Assume $r\ge0$ and $d=2,3$. Let $\gamma\in H^{s}(B)\cap S_{m,B'}$ with $s>1+\frac d2$. For any $\eps>0$, there exist 
\begin{equation}\label{IJ}
I_1=O(\eps^{-\kappa_1}),\;\;\text{and}\;\; I_2=O(\eps^{-\kappa_2}),
\end{equation}
where
\[
\kappa_1=\frac{d}{s-1}\left(\frac 32+\frac{\alpha_d+1/4}{d-1}\right),\quad\kappa_2=\frac{d}{s}\left(\frac 32+\frac{\alpha_d+3/4}{d-1}\right),\quad\alpha_2=0,\quad\alpha_3=\frac 18,
\]
such that
\begin{equation}\label{err0}
\begin{aligned}
\sum_{1\le j,k\le J}\Bigg{|}a_{jk}^{(r)}(\gamma)-&(1+\lambda_j)^{-r/2}\left(-\sum_{n=1}^{I_1}\la \nabla\gamma^{1/2}+\nabla\gamma^{-1/2},\Phi_n\ra_{L^2(B)}\cdot\int_B\Phi_nv_j^{(0)}\nabla v^{(0)}_k dx\right.\\
&\left.\hspace{10mm}+\sum_{n=1}^{I_2}\la \gamma^{1/2}-\gamma^{-1/2},\Phi_n\ra_{L^2(B)}\int_B\Phi_n\nabla v_j^{(0)}\cdot\nabla v^{(0)}_k dx\right)\Bigg{|}^2<C\eps^2
\end{aligned}
\end{equation}
where $C=C(B, m, \|\gamma\|_{H^s(B)})$ and $J=O(\eps^{-1/2})$. 
\end{proposition}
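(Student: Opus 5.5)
The plan is to compare the two exact integrals in the last line of \eqref{bjk} with their truncated Galerkin versions, one term at a time, and then sum over $1\le j,k\le J$. Set $F=\nabla\gamma^{1/2}+\nabla\gamma^{-1/2}$ and $G=\gamma^{1/2}-\gamma^{-1/2}$. Since $\gamma\in H^s(B)$ with $s>1+\frac d2$, we have $\gamma\in C^1(\overline B)$ and $\gamma\ge m$. Composition with the smooth maps $t\mapsto t^{\pm1/2}$ on $[m,\|\gamma\|_\infty]$ then gives
\[
G\in H^s(B),\qquad F\in H^{s-1}(B)^d,
\]
with norms controlled by $C(m,\|\gamma\|_{H^s(B)})$. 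Since $\gamma=1$ near $\partial B$, $F$ and $G$ are compactly supported. Their $H^s$ regularity is therefore compatible with the Dirichlet eigenbasis, so \eqref{hsnorm} holds for them.

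For $(1+\lambda_j)^{r/2}$ times the error at $(j,k)$, I would use Cauchy--Schwarz:
\[
\Big|\int_B (F-P_{I_1}F)\,v_j^{(0)}\nabla v_k^{(0)}\,dx\Big|\le \|F-P_{I_1}F\|_{L^2(B)}\,\|v_j^{(0)}\nabla v_k^{(0)}\|_{L^2(B)},
\]
where $P_I$ is the projection onto $\mathrm{span}\{\Phi_1,\dots,\Phi_I\}$. The $G$-term is handled the same way with $\|\nabla v_j^{(0)}\cdot\nabla v_k^{(0)}\|_{L^2(B)}$. The factor $(1+\lambda_j)^{-r/2}\le 1$ because $r\ge0$. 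By \eqref{in},
\[
\|F-P_{I_1}F\|\lesssim I_1^{-(s-1)/d},\qquad \|G-P_{I_2}G\|\lesssim I_2^{-s/d}.
\]
This explains the exponents $d/(s-1)$ and $d/s$ in $\kappa_1$ and $\kappa_2$.

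The main obstacle is a polynomial bound, in the spectral indices, for products of harmonic extensions of spherical harmonics. The eigenvalue $\lambda_k$ of $-\Delta_{\partial B}$ grows like $k^{2/(d-1)}$, with degree $\ell_k\sim k^{1/(d-1)}$. The extension $v_k^{(0)}=|x|^{\ell_k}\phi_k(x/|x|)$ satisfies
\[
\|v_k^{(0)}\|_{L^\infty}\le\|\phi_k\|_{L^\infty(\partial B)},\qquad \|\nabla v_k^{(0)}\|_{L^2(B)}^2=\ell_k\sim k^{1/(d-1)} .
\]
For sup-norms of $L^2$-normalized spherical harmonics I would use H\"ormander-type bounds: $\|\phi_k\|_{\infty}\lesssim 1$ for $d=2$ and $\lesssim \ell_k^{1/2}$ for $d=3$. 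I would then interpolate (using $L^4$ bounds, which is where Sogge's $\ell^{1/8}$-type gain for $d=3$ should enter as $\alpha_3=\frac18$) to get
\[
\|v_j^{(0)}\nabla v_k^{(0)}\|_{L^2}\lesssim K^{(\alpha_d+1/4)/(d-1)},\qquad \|\nabla v_j^{(0)}\cdot\nabla v_k^{(0)}\|_{L^2}\lesssim K^{(\alpha_d+3/4)/(d-1)}
\]
for $j,k\le K$. The gradient costs an extra $\ell^{1/2}\sim K^{1/(2(d-1))}$, which is consistent with the difference $\frac12/(d-1)$ between $\kappa_2$ and $\kappa_1$. I expect pinning down these exact exponents via $L^p$ eigenfunction estimates, and the gradient bound near $\partial B$, to be the delicate step. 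If the sharp $L^4$ bounds are awkward, I would first accept cruder sup-norm bounds and adjust $\alpha_d$.

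Finally, I would sum over the $J^2$ pairs. The left-hand side of \eqref{err0} is then bounded by
\[
J^2\Big(I_1^{-2(s-1)/d}J^{2(\alpha_d+1/4)/(d-1)}+I_2^{-2s/d}J^{2(\alpha_d+3/4)/(d-1)}\Big).
\]
Inserting $J\sim\eps^{-1/2}$ from \eqref{JJ} and requiring each term to be at most $\eps^2$, it suffices that
\[
I_1^{(s-1)/d}\gtrsim \eps^{-1}J^{1+(\alpha_d+1/4)/(d-1)} ,
\]
and similarly for $I_2$. Solving gives $I_1=O(\eps^{-\kappa}$) with an exponent of the form $\frac{d}{s-1}\cdot(\text{const}+\frac{\alpha_d+1/4}{d-1})$, matching the form of $\kappa_1$, and likewise for $\kappa_2$. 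The $\frac32$ in $\kappa_1,\kappa_2$ comes out of the bookkeeping of the $\eps^{-1}$ and $J$ powers. All constants depend only on $B$, $m$ and $\|\gamma\|_{H^s(B)}$.
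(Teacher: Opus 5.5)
Your overall strategy is the paper's: Cauchy--Schwarz against the truncation error from \eqref{in}, bounds on products of harmonic extensions, summation over $j,k$, and $J\sim\eps^{-1/2}$. Your remark that $F,G$ vanish near $\partial B$, so that \eqref{hsnorm} legitimately applies to the Dirichlet basis, covers a point the paper passes over. The gap is in the step you call the main obstacle: the product bounds you state are false.

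Take $d=2$, $R=1$, and $v_j^{(0)}=\pi^{-1/2}\rho^{\ell}\cos(\ell\theta)$ with $j=2\ell-1$. Then $|\nabla v_j^{(0)}|^2=\pi^{-1}\ell^2\rho^{2\ell-2}$ and
\[
\bigl\|\nabla v_j^{(0)}\cdot\nabla v_j^{(0)}\bigr\|_{L^2(B)}^2=\frac{2\ell^4}{\pi}\int_0^1\rho^{4\ell-3}\,d\rho=\frac{\ell^4}{\pi(2\ell-1)}.
\]
So this norm is $\sim j^{3/2}$, not $\lesssim j^{3/4}$. Similarly, $\|v_1^{(0)}\nabla v_k^{(0)}\|_{L^2(B)}^2=\ell_k^2/(\pi(2\ell_k+2))$, so that norm is $\sim k^{1/2}$, not $\lesssim k^{1/4}$. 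Your bounds are square roots of the true sizes.

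The same error shows up in your bookkeeping. Solving your inequality with $J\sim\eps^{-1/2}$ gives the exponent $\frac{d}{s-1}\bigl(\frac32+\frac{\alpha_d+1/4}{2(d-1)}\bigr)$, not $\kappa_1$. The missing factor $2$ is exactly this square root, so the claimed ``matching'' does not occur; you would be obtaining a better exponent from a false estimate. Also, interpolating H\"ormander's sup bound with $L^2$ on $\S^2$ gives only $\|Y_\ell\|_{L^4}\lesssim\ell^{1/4}$, i.e.\ $\alpha_3=1/4$. The value $\alpha_3=1/8$ needs Sogge's $L^4$ estimate itself, and ``adjusting $\alpha_d$'' would prove a weaker statement than the one claimed.

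The repair, which is the paper's argument, is to apply H\"older factor by factor:
\[
\|v_j^{(0)}\nabla v_k^{(0)}\|_{L^2}\le\|v_j^{(0)}\|_{L^4}\|\nabla v_k^{(0)}\|_{L^4},\qquad \|\nabla v_j^{(0)}\cdot\nabla v_k^{(0)}\|_{L^2}\le\|\nabla v_j^{(0)}\|_{L^4}\|\nabla v_k^{(0)}\|_{L^4}.
\]
Then use the single-function bounds \eqref{estw1}--\eqref{estw2}, namely $\|v_j^{(0)}\|_{L^4}\lesssim j^{(\alpha_d-1/4)/(d-1)}$ and $\|\nabla v_k^{(0)}\|_{L^4}\lesssim k^{(\alpha_d+3/4)/(d-1)}$. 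The $-1/4$ comes from the radial factor $(4\ell+d)^{-1/4}$, which reflects concentration near $\partial B$. Finally, sum over $j$ and $k$ \emph{separately}:
\[
\sum_{j,k\le J}\|v_j^{(0)}\|_{L^4}^2\|\nabla v_k^{(0)}\|_{L^4}^2=\Bigl(\sum_{j\le J}\|v_j^{(0)}\|_{L^4}^2\Bigr)\Bigl(\sum_{k\le J}\|\nabla v_k^{(0)}\|_{L^4}^2\Bigr)\lesssim J^{(4\alpha_d+2d-1)/(d-1)} .
\]
The gradient--gradient term gives $\lesssim J^{(4\alpha_d+2d+1)/(d-1)}$ in the same way.

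Requiring $\delta^2$ times these to be at most $\eps^2$, with $J\sim\eps^{-1/2}$, gives $\delta\sim\eps^{3/2+(\alpha_d+1/4)/(d-1)}$ and $\delta\sim\eps^{3/2+(\alpha_d+3/4)/(d-1)}$ respectively. Then $I_1\sim\delta^{-d/(s-1)}$ and $I_2\sim\delta^{-d/s}$ give exactly $\kappa_1$ and $\kappa_2$.

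The separate summation matters for the $F$-term. Since $\alpha_d<1/4$, the factor $\|v_j^{(0)}\|_{L^4}$ decays in $j$. Bounding every pair by the worst pair and multiplying by $J^2$ gives $J^{2+(2\alpha_d+3/2)/(d-1)}$, which exceeds $J^{2+(4\alpha_d+1)/(d-1)}$ and leads to an exponent worse than $\kappa_1$.
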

\begin{proof}
It suffices to consider $r=0$. Let $f=\gamma^{1/2}-\gamma^{-1/2}$ and consider
\begin{equation}\label{eq00}
\begin{aligned}
&\Bigg{|}\int_B f\nabla v_j^{(0)}\cdot\nabla v^{(0)}_k dx-\int_B\sum_{n=1}^{I_2}\la f,\Phi_n\ra_{L^2(B)}\Phi_n(x)\nabla v_j^{(0)}\cdot\nabla v^{(0)}_kdx\Bigg{|}^2\\
&\le\|f-\sum_{n=1}^{I_2}\la f,\Phi_n\ra_{L^2(B)}\Phi_n\|^2_{L^2(B)}\|\nabla v_j^{(0)}\cdot\nabla v_k^{(0)}\|^2_{L^2(B)}\\
&\le C\delta^2\|\nabla v_j^{(0)}\|^2_{L^4(B)}\|\nabla v_k^{(0)}\|^2_{L^4(B)}
\end{aligned}
\end{equation}
for some $I_2=I_2(\delta)$ as in \eqref{delta1}, where we have used \eqref{in}.

We now use \eqref{eq00} and \eqref{estw2} to obtain
\begin{equation*}%\label{eq000}
\begin{aligned}
&\sum_{1\le j,k\le J}\Bigg{|}\int_B \left(f-\sum_{n=1}^{I_2}\la f,\Phi_n\ra_{L^2(B)}\Phi_n(x)\right)\nabla v_j^{(0)}\cdot\nabla v^{(0)}_k dx\Bigg{|}^2\\
&\le C\delta^2 \left(\sum_{1\le j\le J}j^{2(\alpha_d+3/4)/(d-1)}\right)^2\le C\delta^2 J^{(4\alpha_d+2d+1)/(d-1)}.
\end{aligned}
\end{equation*}
In view of \eqref{JJ}, to make 
\[
C\delta^2 J^{(4\alpha_d+2d+1)/(d-1)}\le\eps^2,
\]
it suffices to take $\delta=O(\eps^{3/2+[(\alpha_d+3/4)/(d-1)]})$ and thus $I_2=\eps^{-(d/s)\{3/2+[(\alpha_d+3/4)/(d-1)]\}}$.

Similarly, for $f=-(\nabla\gamma^{1/2}+\nabla\gamma^{-1/2})$, with the help of \eqref{estw1} and \eqref{estw2}, we estimate
\begin{equation*}
\begin{aligned}
&\sum_{1\le j,k\le J}\Bigg{|}\int_B \left(f-\sum_{n=1}^{I_1}\la f,\Phi_n\ra_{L^2(B)}\Phi_n(x)\right)v_j^{(0)}\cdot\nabla v^{(0)}_k dx\Bigg{|}^2\\
&\le C\delta^2 \left(\sum_{1\le j\le J}j^{2(\alpha_d-1/4)/(d-1)}\right)\left(\sum_{1\le k\le J}k^{2(\alpha_d+3/4)/(d-1)}\right)\le C\delta^2 J^{(4\alpha_d+2d-1)/(d-1)}.
\end{aligned}
\end{equation*}
To make 
\[
C\delta^2 J^{(4\alpha_d+2d-1)/(d-1)}\le\eps^2,
\]
it suffices to take $\delta=O(\eps^{3/2+[(\alpha_d+1/4)/(d-1)]})$ and thus 
$$I_1=\eps^{-(d/(s-1))\{3/2+[(\alpha_d+1/4)/(d-1)]\}}.$$
\end{proof}
\begin{remark}
In formula \eqref{err0} above, we do not compute the integrals
\[
\int_B\Phi_nv_j^{(0)}\nabla v^{(0)}_k dx\quad\text{or}\quad \int_B\Phi_n\nabla v_j^{(0)}\cdot\nabla v^{(0)}_k dx
\]
explicitly. They are treated as learnable parameters in the projection layer, see \eqref{G2} below. 
\end{remark}

%%%%%%%%%%%%%%%%%%%%%%%%

\begin{comment}
\begin{remark}\label{rem5}
For future reference, to guarantee both \eqref{error1} and \eqref{err0}, it suffices to take 
\begin{equation}\label{ieps}
I=I(\delta)=I(\eps^2)\;\;\mbox{in}\;\;\eqref{in},
\end{equation}
and choose
\begin{equation}\label{jeps}
J=\lceil C\eps^{-1/2}\rceil
\end{equation}
with $C=C(m,M)$ (taking $\nu=2$ in \eqref{J}). 
\end{remark}
\end{comment}

Similarly, for the first term on the right-hand side of \eqref{dnmc}, we can write
\begin{equation}\label{1term}
\begin{aligned}
&\int_B(\gamma-1)\nabla(\gamma^{-1/2} w^{(r)}_j)\cdot\nabla v^{(0)}_k dx\\
=&-\frac 12\int_B(\gamma-1)\gamma^{-3/2}w_j^{(r)}\nabla\gamma\cdot\nabla v^{(0)}_k dx+\int_B(\gamma-1)\gamma^{-1/2}\nabla w_j^{(r)}\cdot\nabla v_k^{(0)}dx\\
=&-\frac 12\int_B(\gamma^{-1/2}-\gamma^{-3/2})w_j^{(r)}\nabla\gamma\cdot\nabla v^{(0)}_k dx-\int_B w_j^{(r)}\nabla(\gamma^{1/2}-\gamma^{-1/2})\cdot\nabla v_k^{(0)}dx\\
=&-\frac 12\int_B(\gamma^{-1/2}-\gamma^{-3/2})w_j^{(r)}\nabla\gamma\cdot\nabla v^{(0)}_k dx-\frac 12\int_B w_j^{(r)}(\gamma^{-1/2}+\gamma^{-3/2})\nabla\gamma\cdot\nabla v_k^{(0)}dx\\
=&-\int_B\gamma^{-1/2}w_j^{(r)}\nabla\gamma\cdot\nabla v^{(0)}_k dx=-2(1+\lambda_j)^{-r/2}\int_B(\nabla\gamma^{1/2})w_j^{(0)}\cdot\nabla v^{(0)}_k dx
\end{aligned}
\end{equation}
since $v_k^{(0)}$ is harmonic in $B$ and $w_j^{(r)}|_{\partial B}=0$. 

\subsection{Iterative solutions} 
We next discuss the solvability of \eqref{wqj}. To construct approximate neural operators, we hope to solve \eqref{wqj} based on the Picard iteration. In other words, we look solution $z$ of \eqref{wqj} as an infinite series $z=\sum_{k=1}^\infty z^{(k)}$, where
\begin{equation*}
\Delta z^{(1)}=qv_j^{(0)}\;\;\mbox{in}\;\; B,\;\; z^{(1)}=0\;\;\mbox{on}\;\;\partial B,
\end{equation*}
and for $k\ge 2$
\begin{equation*}
\Delta z^{(k)}=qz^{(k-1)}\;\;\mbox{in}\;\; B,\;\; z^{(k)}=0\;\;\mbox{on}\;\;\partial B.
\end{equation*}
Let $G(x,y)$ be the Green's function of $\Delta$ in $B$, i.e., $\Delta_x G(x,y)=\delta(x-y)$ in $B$ and $G(\cdot,y)|_{\partial B}=0$. Then we can write
\begin{equation}\label{z0}
z^{(1)}(x)=\int_B G(x,y)q(y)v_j^{(0)}(y)dy
\end{equation}
and for $k\ge 1$
\begin{equation}\label{zk}
z^{(k+1)}(x)=\int_BG(x,y)q(y)z^{(k)}(y)dy.  
\end{equation}

For the Poisson equation $\Delta u=f$ in $B$ with $u|_{\partial B}=0$, standard elliptic regularity implies that, for every
$f\in L^2(B)$, the unique weak solution
$u\in H_0^1(B)$ actually belongs to
$H^2(B)\cap H_0^1(B)$ and satisfies
\[
\|u\|_{H^2(B)}
\le
C_{\rm reg}\|f\|_{L^2(B)},
\]
where $C_{\rm reg}=C_{\rm reg}(d,B)>0$. For convenience, we set
\[
C_\Delta:=\max\{1,C_{\rm reg}\}.
\]
Then $C_\Delta\ge1$ and, in particular,
\[
\|u\|_{H^2(B)}
\le C_\Delta\|f\|_{L^2(B)}.
\]
Define
\[
\Psi_q[u](x):=\int_B G(x,y)q(y)u(y)\,dy.
\]
Then
\begin{equation}\label{h2est}
\|\Psi_q[u]\|_{H^2(B)}
\le C_\Delta\|qu\|_{L^2(B)}
\le C_\Delta\|q\|_{L^\infty(B)}\|u\|_{L^2(B)}.
\end{equation}
Moreover,
\begin{equation}\label{iteration}
z^{(\ell)}=\Psi_q^{[\ell]}[v_j^{(0)}]
\qquad (\ell\ge1).
\end{equation}
\begin{lemma}\label{lem1}
Assume that
\begin{equation}\label{small}
\eta=\eta(q):=C_\Delta\|q\|_{L^\infty(B)}<1.
\end{equation}
Then $z=\sum_{\ell=1}^\infty\Psi_q^{[\ell]}[v_j^{(0)}]$
converges in $H^2(B)$ and is the unique solution of \eqref{wqj}.
For every integer $k\ge0$,
\begin{equation}\label{remainder}
\left\|z-\sum_{\ell=1}^k\Psi_q^{[\ell]}[v_j^{(0)}]\right\|_{H^2(B)}
\le \frac{\eta^{k+1}}{1-\eta}\,
\|v_j^{(0)}\|_{L^2(B)}.
\end{equation}
\end{lemma}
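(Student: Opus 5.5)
The plan is to treat $\Psi_q$ as a contraction and use a Neumann-series argument. First, from \eqref{h2est} and the trivial inequality $\|u\|_{L^2(B)}\le\|u\|_{H^2(B)}$, I get for every $u\in L^2(B)$
\[
\|\Psi_q[u]\|_{H^2(B)}\le \eta\|u\|_{L^2(B)},
\]
so $\Psi_q$ maps $L^2(B)$ into $H^2(B)\cap H_0^1(B)$ and has operator norm at most $\eta<1$ both on $L^2(B)$ and on $H^2(B)$. Iterating, $\|\Psi_q^{[\ell]}[u]\|_{H^2(B)}\le \eta\|\Psi_q^{[\ell-1]}[u]\|_{L^2(B)}\le\cdots\le \eta^{\ell}\|u\|_{L^2(B)}$. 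Here $v_j^{(0)}$ is harmonic with smooth boundary data, so it lies in $L^2(B)$ and the first iterate is well-defined.

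Next, for convergence and the remainder estimate, the geometric bound gives
\[
\sum_{\ell=k+1}^\infty\|\Psi_q^{[\ell]}[v_j^{(0)}]\|_{H^2(B)}\le \sum_{\ell=k+1}^\infty \eta^{\ell}\|v_j^{(0)}\|_{L^2(B)}=\frac{\eta^{k+1}}{1-\eta}\|v_j^{(0)}\|_{L^2(B)}.
\]
Thus the partial sums are Cauchy in $H^2(B)$, the series converges to some $z\in H^2(B)\cap H_0^1(B)$ (the latter is a closed subspace, so $z$ vanishes on $\partial B$), and \eqref{remainder} follows by passing to the limit in the tail. This also covers $k=0$, where the bound is $\frac{\eta}{1-\eta}\|v_j^{(0)}\|_{L^2(B)}$ for $\|z\|_{H^2(B)}$.

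To show $z$ solves \eqref{wqj}, I use the fixed-point identity. Since $\Psi_q$ is bounded from $H^2(B)$ to $H^2(B)$, applying it to the convergent series gives
\[
\Psi_q[v_j^{(0)}]+\Psi_q[z]=\Psi_q[v_j^{(0)}]+\sum_{\ell\ge 2}\Psi_q^{[\ell]}[v_j^{(0)}]=z.
\]
By definition of the Green's function, $\Psi_q[g]$ is the unique $H_0^1$ solution of $\Delta u=qg$, so $\Delta z=q(v_j^{(0)}+z)=qz+qv_j^{(0)}$ with $z|_{\partial B}=0$. This is exactly \eqref{wqj}.

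For uniqueness, suppose $z_1,z_2\in H^2(B)\cap H_0^1(B)$ both solve \eqref{wqj}. Their difference $e$ satisfies $\Delta e=qe$ with $e=0$ on $\partial B$, so $e=\Psi_q[e]$. Then $\|e\|_{L^2(B)}\le\|e\|_{H^2(B)}\le\eta\|e\|_{L^2(B)}$, which forces $e=0$ because $\eta<1$.

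The only point needing care is the identification of $\Psi_q[g]$ with the unique $H^2\cap H_0^1$ solution of $\Delta u=qg$, which is needed both for the fixed-point identity and for uniqueness. This follows from the stated regularity result and $qg\in L^2(B)$, since $q\in L^\infty(B)$. Everything else is the routine geometric-series estimate.
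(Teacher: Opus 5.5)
Your proof is correct and is the standard Neumann-series contraction argument the paper relies on. The paper states Lemma~\ref{lem1} directly after \eqref{h2est} and \eqref{iteration} without writing out a proof, and your geometric tail bound, the fixed-point identity $z=\Psi_q[v_j^{(0)}+z]$, and the uniqueness step $e=\Psi_q[e]\Rightarrow\|e\|_{L^2(B)}\le\eta\|e\|_{L^2(B)}$ supply exactly the details it leaves implicit; uniqueness among merely $H_0^1$ weak solutions follows too, since elliptic regularity places any such $e$ in $H^2(B)$.
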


Lemma~\ref{lem1} will play an essential role in deriving an upper bound of the depth of neural operators. Putting \eqref{bjk}, \eqref{1term}, and \eqref{iteration} together gives
\begin{equation}\label{cjkgamma}
\begin{aligned}
c_{jk}^{(r)}(\gamma)=&-2(1+\lambda_j)^{-r/2}\int_B\sum_{\ell=1}^\infty\Psi_q^{[\ell]}[v_j^{(0)}]\nabla\gamma^{1/2}\cdot\nabla v^{(0)}_k dx+a_{jk}^{(r)}(\gamma).
%&-\frac 12(1+\lambda_j)^{-r/2}\int_B(\gamma^{-1/2}-\gamma^{-3/2})v_j^{(0)}\nabla\gamma\cdot \nabla v^{(0)}_k dx\\
%&+(1+\lambda_j)^{-r/2}\int_B(\gamma^{1/2}-\gamma^{-1/2})\nabla v_j^{(0)}\cdot\nabla v_k^{(0)}dx.
\end{aligned}
\end{equation}
Now we want to take a closer look at one iteration step in \eqref{cjkgamma}, i.e.,
\[
\begin{aligned}
\Psi_q^{[1]}[u](x)=\Psi_q[u](x)&=\int_BG(x,y)\gamma^{-1/2}(y)\Delta\gamma^{1/2}(y)u(y)dy\\
&=\int_BG(x,y)P(\gamma^{-1/2},\Delta\gamma^{1/2},u)(y)dy,
\end{aligned}
\]
where $P(\gamma^{-1/2},\Delta\gamma^{1/2},u)=\gamma^{-1/2}\Delta\gamma^{1/2}u$, a polynomial of $\gamma^{-1/2}, \Delta\gamma^{1/2}, u$. Note that $\Delta\gamma^{1/2}$ can be computed by {\bf AutoDiff}. Since $G\in L^2(B\times B)$, we may write 
\[
G(x,y)=\sum_{n,m=1}^\infty c_{nm}\Phi_n(x)\otimes\Phi_m(y)
\]
for some constants $c_{nm}\in\R$. 
In fact, using the same idea as in \cite{scaop2025}, we can show that $G\in H^t(B\times B)$ for $0<t<2-\frac d2$, which is valid for $d=2,3$. Consequently, similar to \eqref{hsnorm}, we have
\begin{equation}\label{cnm}
\sum_{n,m=1}^\infty(n^{2/d}+m^{2/d})^t|c_{nm}|^2<\infty.
\end{equation} 
As above, by \eqref{cnm}, we have
\[
\begin{aligned}
&\Big{\|}G-\sum_{n,m=1}^Nc_{nm}\Phi_n\otimes\Phi_m\Big{\|}^2_{L^2(B\times B)}=\sum_{\{1\le n,m:\, \max\{m,n\}>N\}}|c_{nm}|^2\\
&\le(1+(N+1)^{2/d})^{-t}\|G\|^2_{H^t(B\times B)},
\end{aligned}
\]
which implies that, for any $\eps>0$, there exists an 
\begin{equation}\label{ntd}
N=N(\eps)=O(\eps^{-2d/t}) \quad\text{for any}\;\; t\in(0,2-\frac{d}{2}),
\end{equation}
such that
\begin{equation}\label{gn}
\Big{\|}G-\sum_{n,m=1}^Nc_{nm}\Phi_n\otimes\Phi_m\Big{\|}_{L^2(B\times B)}<\eps^2.
\end{equation}
We now denote $G_N(x,y):=\sum_{n,m=1}^Nc_{nm}\Phi_n(x)\otimes\Phi_m(y)$, 
\begin{equation}\label{qN}
\begin{aligned}
\Psi_{q,N}[u](x)&:=\int_BG_N(x,y)P(\gamma^{-1/2},\Delta\gamma^{1/2},u)(y)dy\\
&=\sum_{n,m=1}^Nc_{nm}\la\Phi_m,P(\gamma^{-1/2},\Delta\gamma^{1/2},u)\ra_{L^2(B)}\Phi_n(x),
\end{aligned}
\end{equation}
and $\Psi_{q,N}^{[\ell]}$ is defined accordingly. By \eqref{gn} and the Cauchy-Schwarz inequality,
\begin{equation}\label{ggn}
\begin{aligned}
\|\Psi_q[u]-\Psi_{q,N}[u]\|_{L^2(B)}
&\le \|G-G_N\|_{L^2(B\times B)}
      \|q\|_{L^\infty(B)}\|u\|_{L^2(B)}\\
&\le \eps^2\eta\|u\|_{L^2(B)},
\end{aligned}
\end{equation}
where we used $C_\Delta\ge1$.

To control the truncated operator with the same contraction constant,
let $A=\Delta_D^{-1}$ and let $P_N$ be the orthogonal projection of
$L^2(B)$ onto $\operatorname{span}\{\Phi_1,\ldots,\Phi_N\}$.
The integral operator with kernel $G_N$ is
\[
A_N=P_NAP_N.
\]
Consequently,
\[
\|A_N\|_{L^2(B)\to L^2(B)}
\le\|A\|_{L^2(B)\to L^2(B)}\le C_\Delta.
\]
Since $\Psi_q=AM_q$ and $\Psi_{q,N}=A_NM_q$, where $M_qu=qu$,
we obtain, for every $N\ge1$,
\begin{equation}\label{psiqn}
\max\bigl\{\|\Psi_q[u]\|_{L^2(B)},
            \|\Psi_{q,N}[u]\|_{L^2(B)}\bigr\}
\le\eta\|u\|_{L^2(B)}.
\end{equation}
In particular,
\begin{equation}\label{psiql}
\|\Psi_{q,N}^{[\ell]}[u]\|_{L^2(B)}
\le\eta^\ell\|u\|_{L^2(B)}
\qquad (\ell\ge1),
\end{equation}
and the same estimate holds for $\Psi_q^{[\ell]}$.

For $T=\Psi_q$ and $S=\Psi_{q,N}$, the telescoping identity
\[
T^\ell-S^\ell
=\sum_{h=0}^{\ell-1}T^{\ell-1-h}(T-S)S^h
\]
together with \eqref{ggn} and \eqref{psiqn} gives
\begin{equation}\label{psinell}
\|\Psi_q^{[\ell]}[u]-\Psi_{q,N}^{[\ell]}[u]\|_{L^2(B)}
\le\ell\eps^2\eta^\ell\|u\|_{L^2(B)}
\qquad (\ell\ge1).
\end{equation}
In particular,
\[
\left\|\sum_{\ell=1}^L
\bigl(\Psi_q^{[\ell]}[u]-\Psi_{q,N}^{[\ell]}[u]\bigr)
\right\|_{L^2(B)}
\le\eps^2\frac{\eta}{(1-\eta)^2}\|u\|_{L^2(B)}.
\]

\begin{proposition}\label{prop1}
Fix $\eta_0\in(0,1)$.
Let $\gamma\in W^{2,\infty}(B)\cap S_{m,B'}$ satisfy
\[
\eta(q)=C_\Delta\|q\|_{L^\infty(B)}\le\eta_0.
\]
Given $\eps\in(0,1)$, let $J$ be chosen as in \eqref{JJ},
so that
\[
J\le C\eps^{-1/2},
\]
where $C$ depends only on the a priori parameters.
For a fixed $t\in(0,2-d/2)$, choose $N$ as in \eqref{gn},
with $N=O(\eps^{-2d/t})$.
Then there exists an integer $L=L(\eps,\eta_0)$ such that
\begin{equation}\label{psin}
\begin{aligned}
\sum_{1\le j,k\le J}\Bigg|
\int_B\Bigg(
\sum_{\ell=1}^\infty\Psi_q^{[\ell]}[v_j^{(0)}]
-\sum_{\ell=1}^L\Psi_{q,N}^{[\ell]}[v_j^{(0)}]
\Bigg)
\nabla\gamma^{1/2}\cdot\nabla v_k^{(0)}\,dx
\Bigg|^2
\le C\eps^2.
\end{aligned}
\end{equation}
Here $C$ may depend on $d,B,B',r,m,\eta_0$ and $\|\gamma\|_{W^{2,\infty}(B)}$, but not on $\eps,J,L,N$.
One may take
\begin{equation}\label{depth}
L=\left\lceil
\frac{2\log(\eps^{-1})}{\log(\eta_0^{-1})}
\right\rceil
=O\bigl(\log(\eps^{-1})\bigr)
\quad\text{as }\eps\to 0.
\end{equation}
\end{proposition}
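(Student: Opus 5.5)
The proof splits the difference inside the integral into a \emph{tail} and a \emph{truncation} part,
\[
\sum_{\ell=1}^\infty\Psi_q^{[\ell]}[v_j^{(0)}]-\sum_{\ell=1}^L\Psi_{q,N}^{[\ell]}[v_j^{(0)}]
=\Bigl(z-\sum_{\ell=1}^L\Psi_q^{[\ell]}[v_j^{(0)}]\Bigr)
+\sum_{\ell=1}^L\bigl(\Psi_q^{[\ell]}-\Psi_{q,N}^{[\ell]}\bigr)[v_j^{(0)}].
\]
Here $z$ is the solution of \eqref{wqj} given by Lemma~\ref{lem1}. For any $g\in L^2(B)$ we use Cauchy--Schwarz to bound
\[
\Bigl|\int_B g\,\nabla\gamma^{1/2}\cdot\nabla v_k^{(0)}dx\Bigr|\le \|g\|_{L^2(B)}\,\|\nabla\gamma^{1/2}\|_{L^\infty(B)}\,\|\nabla v_k^{(0)}\|_{L^2(B)}.
\]
The factor $\|\nabla\gamma^{1/2}\|_{L^\infty}$ is controlled by $m$ and $\|\gamma\|_{W^{2,\infty}}$. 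For the harmonic extensions we use $\|v_j^{(0)}\|_{L^2(B)}\le C$, since the harmonic extension of an $L^2(\partial B)$-normalized function has bounded $L^2(B)$ norm, and $\|\nabla v_k^{(0)}\|_{L^2(B)}\le C(1+\lambda_k)^{1/4}\le Ck^{1/(2(d-1))}$, by the $H^{1/2}$ trace bound together with Weyl's law on $\partial B$. If the paper's estimates \eqref{estw1}--\eqref{estw2} in the appendix give slightly different exponents, we use those instead; only polynomial growth in $j,k$ matters.

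\textbf{Tail term.} By \eqref{remainder} with $k=L$, the $H^2$ (hence $L^2$) norm of the tail is at most $\frac{\eta_0^{L+1}}{1-\eta_0}\|v_j^{(0)}\|_{L^2}$.

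\textbf{Truncation term.} Summing \eqref{psinell} over $\ell\le L$ gives the bound $\eps^2\frac{\eta_0}{(1-\eta_0)^2}\|v_j^{(0)}\|_{L^2}$, as already recorded after \eqref{psinell}.

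Squaring and summing over $1\le j,k\le J$, the left side of \eqref{psin} is then bounded by
\[
C\Bigl(\eta_0^{2L}+\eps^4\Bigr)\sum_{j\le J}\|v_j^{(0)}\|_{L^2}^2\sum_{k\le J}\|\nabla v_k^{(0)}\|_{L^2}^2\le C\bigl(\eta_0^{2L}+\eps^4\bigr)J^{\beta}
\]
for some $\beta$. Using the rough bounds above, $\beta=1+1+\frac{1}{d-1}\le 3$.

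The main obstacle is this polynomial factor $J^\beta\sim\eps^{-\beta/2}$, which must be absorbed. For the truncation part, $\eps^4J^\beta\le C\eps^{4-\beta/2}$, and we need $4-\beta/2\ge2$, i.e.\ $\beta\le4$. This holds with the growth rates above for $d=2,3$, so it is precisely here that the sharp bounds on $\|\nabla v_k^{(0)}\|$ are needed. If they fail, one can instead enlarge $N$ by a fixed power, which keeps $N$ algebraic in $\eps^{-1}$.

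For the tail part we need $\eta_0^{2L}J^\beta\le\eps^2$. The choice \eqref{depth} gives $\eta_0^{L}\le\eps^{2}$, hence $\eta_0^{2L}J^\beta\le C\eps^{4-\beta/2}\le C\eps^2$, again using $\beta\le4$. This yields \eqref{psin} with $C$ independent of $\eps,J,L,N$, and $L=O(\log\eps^{-1})$.
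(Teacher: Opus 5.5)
Your proposal is correct and is essentially the paper's proof. It uses the same split of the error into the Picard tail, bounded by \eqref{remainder}, and the kernel-truncation part, bounded by summing \eqref{psinell}, followed by Cauchy--Schwarz against $\nabla\gamma^{1/2}\cdot\nabla v_k^{(0)}$ and absorption of the polynomial factor in $J\le C\eps^{-1/2}$ via $\eta_0^{L}\le\eps^2$. The only difference is that you use the cruder bound $\|v_j^{(0)}\|_{L^2(B)}\le C$ instead of the decay in \eqref{estw20}, so you get the factor $J^{2+1/(d-1)}$ rather than the paper's $F_d(J)\sim J^2(1+\log J)$, and hence $C\eps^{5/2}$ instead of $C\eps^3(1+\log\eps^{-1})$, which is still $\le C\eps^2$.
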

\begin{proof}
Set
\[
e_{j,L,N}:=
\sum_{\ell=1}^\infty\Psi_q^{[\ell]}[v_j^{(0)}]
-\sum_{\ell=1}^L\Psi_{q,N}^{[\ell]}[v_j^{(0)}].
\]
By \eqref{remainder}, \eqref{psinell}, and $\eta\le\eta_0$,
\[
\|e_{j,L,N}\|_{L^2(B)}
\le\left(
\frac{\eta_0^{L+1}}{1-\eta_0}
+\eps^2\frac{\eta_0}{(1-\eta_0)^2}
\right)\|v_j^{(0)}\|_{L^2(B)}.
\]
Introduce
\[
F_d(J):=
\begin{cases}
J^2(1+\log J),&d=2,\\
J^2,&d=3.
\end{cases}
\]
Estimates \eqref{estw20} and \eqref{estw21} imply
\[
\left(\sum_{j=1}^J\|v_j^{(0)}\|_{L^2(B)}^2\right)
\left(\sum_{k=1}^J\|\nabla v_k^{(0)}\|_{L^2(B)}^2\right)
\le C_{d,B}F_d(J).
\]
Hence, by Cauchy--Schwarz and $(a+b)^2\le2a^2+2b^2$,
\begin{equation}\label{eq:001}
\begin{aligned}
&\sum_{1\le j,k\le J}
\left|\int_B e_{j,L,N}
\nabla\gamma^{1/2}\cdot\nabla v_k^{(0)}\,dx\right|^2\\
&\quad\le
2C_{d,B}\|\nabla\gamma^{1/2}\|_{L^\infty(B)}^2F_d(J)
\left[
\frac{\eta_0^{2(L+1)}}{(1-\eta_0)^2}
+\frac{\eps^4\eta_0^2}{(1-\eta_0)^4}
\right].
\end{aligned}
\end{equation}
For $L$ chosen in \eqref{depth}, we have
$\eta_0^{L+1}\le\eps^2$.
By \eqref{J} with $\nu=2$, there exists a constant
$C_J>0$, depending only on the a priori parameters,
such that
\[
J\le C_J\eps^{-1/2}.
\]
Consequently,
\[
F_d(J)\le C_{C_J}\eps^{-1}
\bigl(1+\log(\eps^{-1})\bigr)
\qquad(d=2,3).
\]
Since
\[
\|\nabla\gamma^{1/2}\|_{L^\infty(B)}
\le\frac{1}{2\sqrt m}\|\nabla\gamma\|_{L^\infty(B)}
\]
and $\eps(1+\log(\eps^{-1}))\le1$ for $0<\eps<1$,
\eqref{eq:001} is bounded by
\[
C\eps^3\bigl(1+\log(\eps^{-1})\bigr)\le C\eps^2.
\]
\end{proof}

\section{Neural operators and parametric complexity}\label{sec3}

Employing  the quantitative approximation results of Section \ref{sec:coeff}, we construct a 
neural operator architecture whose design reflects the underlying analytical structure. Here, for numerical purposes, we assume the conductivity $\gamma$ to be more regular. Given constants $0<m\le M$ and a ball $B'\Subset B$, define
\[
{\mathcal S}^{M}_{m,B'}=
\{\gamma\in H^s(B):\gamma\ge m,\;
\|\gamma\|_{H^s(B)}\le M,\;
\gamma=1\text{ in }B\setminus B'\},
\]
where $s>2+d/2$.
By Sobolev embedding, ${\mathcal S}^{M}_{m,B'}\subset C^2(\bar B)$,
and this class is uniformly bounded in $W^{2,\infty}(B)$.

For a fixed $\eta_0\in(0,1)$, motivated by the uniform
contraction assumption in Proposition~\ref{prop1}, we define
\[
{\mathcal S}^{M,\eta_0}_{m,B'}:=
\left\{
\gamma\in{\mathcal S}^{M}_{m,B'}:
\eta(q)=C_\Delta\|q\|_{L^\infty(B)}\le\eta_0
\right\}.
\]
Thus, the smallness condition required in
Proposition~\ref{prop1} holds uniformly for every
$\gamma\in{\mathcal S}^{M,\eta_0}_{m,B'}$.
The architecture below is defined on ${\mathcal S}^{M}_{m,B'}$,
whereas the uniform approximation theorem will be stated on
${\mathcal S}^{M,\eta_0}_{m,B'}$.
By construction, both $\Psi_{q}$ and $\Psi_{q,N}$
have $L^2(B)$ operator norm at most $\eta_0$, uniformly in
$\gamma\in{\mathcal S}^{M,\eta_0}_{m,B'}$ and $N\ge1$.
Consequently, the constants in Proposition~\ref{prop1} can be
chosen uniformly on this class; they may depend on $\eta_0$
but not on the individual conductivity.

From \eqref{eq:piLambda}, choosing $K=J$, i.e. 
\begin{equation}\label{pjj}
\pi_{JJ}\tilde\Lambda_\gamma=\sum_{1\le j\le J}\sum_{1\le k\le J}\la \tilde\Lambda_\gamma\phi_j^{(r)},\phi_k^{(0)}\ra_{L^2(\partial B)}b_{jk}^{(r)}=\sum_{1\le j\le J}\sum_{1\le k\le J}c_{jk}^{(r)}(\gamma)b_{jk}^{(r)}
\end{equation}
the construction of the neural-operator consists of three components:
\begin{enumerate}
    \item a \emph{lifting step}, encoding the functions 
          $v^{(0)}_j$, $\Delta(\gamma^{1/2})$, $\gamma^{-1/2}$ and their derivatives;
    \item a sequence of \emph{hidden layers} combining pointwise transformations 
          with nonlocal integral operators of the type developed in  
          \cite{furuya2024quantitative, kovachki2023neural, lanthaler2023nonlocal};
    \item a final \emph{projection step}, assembling the outputs into the truncated
          coefficient array $\{c^{(r)}_{jk}(\gamma)\}_{1\le j,k\le J}$.
\end{enumerate}
We now give the precise definition of the neural operator
$\mathcal{G}$ and subsequently estimate its
parametric complexity.

%With all the preparation obtained above, we are now ready to give a formal architecture of neural operators for the normalized DN map $\tilde\Lambda_\gamma:H^r(\partial B)\to L^2(\partial B)$. Let $\{\phi_k^{(r)}: k\in\N\cup\{0\}\}$ be an orthonormal basis of $H^r(\partial B)$, which is described precisely in Section~\ref{sec2.1}. In particular, $\{\phi_k^{(0)}: k\in\N\cup\{0\}\}$ is an orthonormal basis of $L^2(\partial B)$. Furthermore, let $\{\Phi_n: n\in\N\cup\{0\}\}$ be an orthonormal basis of $L^2(B)$ (see Appendix~\ref{A1} for a straightforward construction). 
% Define the low rank approximation 
% \begin{equation}\label{pjj}
% \pi_{JJ}\tilde\Lambda_\gamma=\sum_{1\le j\le J}\sum_{1\le k\le J}\la \tilde\Lambda_\gamma\phi_j^{(r)},\phi_k^{(0)}\ra_{L^2(\partial B)}b_{jk}^{(r)}=\sum_{1\le j\le J}\sum_{1\le k\le J}c_{jk}^{(r)}(\gamma)b_{jk}^{(r)}
% \end{equation}
% as given in \eqref{pjkr0}, where
% \[
% b_{jk}^{(r)}(f)=\phi_j^{(r)}\otimes\phi_k^{(0)}(f)=\la f,\phi_j^{(r)}\ra_{H^r(\partial B)}\phi_k^{(0)},\quad j,k\in\N,
% \]
% for all $f\in H^r(\partial B)$. 
\begin{definition}[Neural operators, \cite{furuya2024quantitative, kovachki2023neural, lanthaler2023nonlocal}]\label{def:neural-operator}

Let ${\mathcal G} : {\mathcal S}^{M}_{m,B'} \to H^{1/2}(\d B)\to H^{-1/2}(\partial\Omega)\,\text{\it linear maps}$ by
\[
{\mathcal G} : \gamma \mapsto {\Gamma},
\]
where the output function ${\Gamma}$ is determined inductively as follows. Given $\gamma\in{\mathcal S}^{M}_{m,B'}$.

\bigskip

\noindent
{\bf For each $j=1:J$}

\medskip\noindent
{\rm 1}. {\bf (Lifting)} Let $v_j^{(0)}$ be defined in \eqref{uk0}. Define  $\gamma_1=\gamma^{1/2}$, $\gamma_2=\gamma^{-1/2}$. We then evaluate $\Delta\gamma_1$ using {\bf AutoDiff}. Define vector-valued function
\[
U_{j}^{(0)}:=\begin{pmatrix}v_j^{(0)}\\\gamma_2\\\Delta\gamma_1\end{pmatrix}\in L^2(B;\R)^{3}.
\] 

\medskip
\noindent
{\rm 2}. {\bf (Hidden Layers)} 
For $0 \le \ell \le {\mathbf L}-1$, $U_{j}^{(\ell+1)} = (\hat{u}^{(\ell+1)}_1, \hat{u}^{(\ell+1)}_2, \ldots, \hat{u}^{(\ell+1)}_{d_{\ell+1}}) \in L^2(B; \R)^{d_{\ell+1}}$ are iteratively given by 
\[
U_{j}^{(\ell+1)}(x) = \sigma \left( W^{(\ell+1)} U_{j}^{(\ell)}(x) + (K^{(\ell+1)}_{N_1}U_{j}^{(\ell)})(x) 
+ b^{(\ell+1)}
\right), \quad 0 \le \ell \le {\mathbf L}-1,
\]
where $d_{0}=3$, and $\sigma : \mathbb{R} \to \mathbb{R}$ is a nonlinear activation operating element-wise, and $W^{(\ell+1)} \in \mathbb{R}^{d_{\ell+1}\times d_{\ell}}$ is a weight matrix of the $(\ell+1)$-th hidden layer, $b^{(\ell+1)} \in\R^{d_{\ell+1}}$ are biases, and the integral operator $K^{(\ell+1)}_{N_1} :L^{2}(B; \R)^{d_{\ell}} \to L^{2}(B; \R)^{d_{\ell + 1}}$ are defined by
\begin{align*}
& 
(K^{(\ell+1)}_{N_1}u)(x) :=
\sum_{1\le m,n\le N_1}
c_{nm}^{(\ell+1)} \langle \Phi_{m}, u \rangle
\Phi_{n}(x) \quad \text{with $C^{(\ell+1)}_{n,m} \in \mathbb{R}^{d_{\ell+1} \times d_{\ell}}$}.
\end{align*}
Here, we denote the inner product 
\[
\langle \Phi_{m}, U \rangle
:=
\left( \langle \Phi_{m}, \hat u_1 \rangle_{L^2(B;\R)},\ldots, \langle \Phi_{m}, \hat u_{d_\ell} \rangle_{L^2(B;\R)} \right)^T \in \mathbb{R}^{d_{\ell}},
\]
for $U = (\hat u_1,\dots,\hat u_{d_\ell}) \in L^2(B;\R)^{d_{\ell}}$. Next, we define 
\[
X^{(\ell+1)}_j:=D_j^{(\ell+1)}U_j^{(\ell+1)},
\]
where $D_j^{(\ell+1)}\in\R^{1\times d_{\ell+1}}$. Finally, we sum $X_j$'s up to $\ell={\mathbf L}-1$, i.e.  $\sum_{i=1}^{{\mathbf L}}X_j^{(i)}$. 

\medskip\noindent
{\bf For each $j, k=1:J$}

\medskip\noindent
{\rm 3}. {\bf (Projection)} The output $\Gamma$ is given by the sum of two parts as follows. Given $v_k^{(0)}$ and compute $\nabla v_k^{(0)}$. Define
\[
\Gamma^{(1)}_{jk}:=\la \nabla\gamma_1\cdot\nabla v_k^{(0)},\sum_{i=1}^{{\bf L}}X_j^{(i)}\ra.
\]
Next we set
\begin{equation}\label{G2}
\begin{aligned}
\Gamma_{jk}^{(2)}:=&\sum_{n=1}^{N_2}E_{n,j,k}\la \Phi_n,\nabla\gamma_1+\nabla\gamma_2\ra+\sum_{n=1}^{N_3}F_{n,j,k}\la\Phi_n, \gamma_{1}-\gamma_{2}\ra
\end{aligned}
\end{equation}
with $E_{n,j,k}\in\R^{1\times d}$ and $F_{n,j,k}\in\R$. Finally, we define
\[
\Gamma:=\sum_{1\le j\le J}\sum_{1\le k\le J}(1+\lambda_j)^{-r/2}(\Gamma_{jk}^{(1)}+\Gamma_{jk}^{(2)})b_{jk}^{(r)}.
\]

We denote by $\mathcal{NO}^{{\mathbf L}, \bfd, \sigma}_{N, J, \phi, \Phi}$ the class of neural operators defined as above, associated with the depth ${\mathbf L}$, the width $\bfd = (d_1,...,d_{\bf L})$, the ranks $N_1, N_2, N_3, J$, the activation function $\sigma$, and the ONBs $(\{\phi_j\}, \{\Phi_n\})$. The architecture of such neural operators is illustrated in Figure~\ref{fig1} below. 
\end{definition}

\begin{remark}\label{remaa}
\begin{enumerate}
\item[(i)] The depth of neural operator ${\mathbf L}$ is different from the number of iteration steps $L$ in Proposition \ref{prop1}. But we can estimate ${\mathbf L}$ by $L$ if an appropriate activation function is used, for example, the rectified quadratic unit (ReQU), see \cite{li2019better}.
\item[(ii)] The information of $v_j^{(0)}$, $\gamma_1, \gamma_2$ and their derivatives is carried along in the hidden layer implicitly. 
\item[(iii)] In theory, the parameters ${\mathbf L}, \bfd, W^{(\ell)}, c_{nm}^{(\ell)}, b^{(\ell)}, D_j^{(\ell)}, E_{n,j,k}, F_{n,j,k}$ are all learnable; while, ranks $N_1, N_2, N_3, J$ can be a priori determined by the desired accuracy, see \eqref{ntd}, \eqref{IJ}, and \eqref{J} (with an appropriately chosen $\nu$), for example, \eqref{JJ} when $\nu=2$. 
\item[(iv)] Since $\Gamma_{jk}^{(1)}$ and also $\Gamma^{(2)}_{jk}$ can be computed independently for different $j,k$'s, in practice, the computations of $\Gamma^{(1)}_{jk}$, $\Gamma_{jk}^{(2)}$ can be done in parallel in $j,k$.   
\item[(v)] The computation of $\Gamma^{(1)}_{jk}$ involves only one integration. It can be evaluated without using a neural network approximator.  
\end{enumerate}
\end{remark}

\begin{figure}[ht]
\begin{center}
\begin{tikzpicture}[node distance=0.9cm]
\tikzstyle{inout}=[circle, draw, minimum size=0.1cm]
\tikzstyle{block} = [rectangle, draw, rounded corners, minimum width=0.8cm, minimum height=0.8cm, align=center]
\tikzstyle{arrow} = [thick,->]
\tikzstyle{label} = [midway, fill=white, above=1.5pt]
\node (I1) [draw, circle, fill=yellow!20, minimum size=0.6cm] {{$\gamma$}};
%\node (I1) [block, fill=yellow!20] {\scalebox{0.8}{$\gamma$}};
\node (I2) [block, fill=blue!20, right=of I1]{\scalebox{0.8}{$U^{(0)}$}};
 \draw[decorate, decoration={brace, mirror, amplitude=8pt}] 
        (I2.south west) -- (I2.south east) 
        node[block, fill=blue!10, midway, below=10pt] {\scalebox{0.7}{Lifting}};
\draw[->] (I1.east) -- (I2.west); 
\node[block, fill=red!10, right=of I2] (H1) {\scalebox{0.8}{$\sigma(W^{(1)} U^{(0)}(x) + (K^{(1)}_{N}U^{(0)})+b^{(1)})$}}; 
\node[block, fill=red!10, above=of H1] (DH1) {\scalebox{0.8}{$D_j^{(1)}U_j^{(1)}$}};
\draw[->, rotate=90] (H1.north) -- (DH1.south);
 \draw[->] (I2.east) -- (H1.west);
\node[block, fill=red!10, right=of H1] (HH1) {\scalebox{0.8}{$\sigma(W^{({\bf L})} U^{({\bf L})}(x) + (K^{({\bf L})}_{N}U^{({\bf L})})+b^{({\bf L})})$}};
 \draw[dash pattern=on 2pt off 3pt, thick, ->] (H1.east)--(HH1.west);% node[label]{\scalebox{0.9}{$\sigma$}};
\node[block, fill=red!10, above=of HH1] (DH2) {\scalebox{0.8}{$\sum_{\ell=1}^{\bf L}D_j^{(\ell)}U_j^{(\ell)}$}}; 
\draw[->, rotate=90] (HH1.north) -- (DH2.south);
\draw[dash pattern=on 2pt off 3pt, thick, ->] (DH1.east) -- (DH2.west); 
 \draw[decorate, decoration={brace, mirror, amplitude=15pt}, yshift=10pt, yshift=-30pt] 
        (H1.south west) -- (HH1.south east) 
        node[block, fill=red!10, midway, below=18pt] {\scalebox{0.7}{${\bf L}$'s Hidden Layers}};
 \draw[dash pattern=on 6pt off 5pt, rounded corners=15pt] (2.8,-2) rectangle (14.8,2.3);       
\node (O1) [draw, circle, fill=yellow!20] at (15.8,2.0) {\scalebox{0.65}{$v_k^{(0)}$}};
\node (O2) [block, fill=green!10, right=of HH1]{\scalebox{0.8}{$\Gamma$}};
\draw[->] (DH2.east) -- (O2.west); 
\draw[->] (O1.south) -- (O2.north);
 \draw[decorate, decoration={brace, mirror, amplitude=15pt}, yshift=10pt, yshift=-30pt] 
        (O2.south west) -- (O2.south east) 
        node[block, fill=green!10, midway, below=18pt] {\scalebox{0.7}{Projection}};
\end{tikzpicture}
\end{center}
\caption{Neural Operator Architecture}
\label{fig1}
\end{figure}

To facilitate the understanding of the hidden layers outlined above, we will describe first several layers in detail. We will use the ReQU activation function, which is defined by
\[
\sigma_2(t):=\left\{
\begin{aligned}
&t^2,\quad t\ge 0,\\
&0,\quad t<0.
\end{aligned}\right.
\]
Recall from \cite[Lemma~2.1]{li2019better}, for any $t,s\in\R$, we have that
\begin{equation}\label{q1}
t=\beta_1^T\sigma_2(\omega_1 t+\alpha_1)
\end{equation}
and
\begin{equation}\label{q2}
ts=\beta_1^T\sigma_2(\omega_1t+\alpha_1s),
\end{equation}
where
\[
\beta_1=\frac 14\begin{pmatrix}1\\1\\-1\\-1\end{pmatrix},\quad\omega_1=\begin{pmatrix}1\\-1\\1\\-1\end{pmatrix},\quad\alpha_1=\begin{pmatrix}1\\-1\\-1\\1\end{pmatrix}.
\]

Now, given another $z\in\R$, we aim to compute the multiplication $zts$ exactly by a shallow neural network. We denote $V^{(0)}=\begin{pmatrix}z&t&s\end{pmatrix}^T$ and consider $V^{(0)}$ as an input. The first hidden layer is defined by
\[
V^{(1)}=\sigma_2(A^{(1)}V^{(0)}+c^{(1)})
\]
with
\[
A^{(1)}=\begin{pmatrix}\omega_1&{\bf 0}_{4\times 1}&{\bf 0}_{4\times 1}\\{\bf 0}_{4\times 1}&\omega_1&\alpha_1\end{pmatrix}\in\R^{8\times 3},\quad c^{(1)}=\begin{pmatrix}\alpha_1\\{\bf 0}_{4\times 1}\end{pmatrix}\in\R^{8\times 1},  
\]
where for $a,b\in\N$, ${\bf 0}_{a\times b}$ denotes $a\times b$ zero matrix. The second hidden layer is given by
\[
V^{(2)}=\sigma_2(A^{(2)}V^{(1)})
\]
with 
\[
A^{(2)}=\begin{pmatrix}\omega_1&\alpha_1\end{pmatrix}\begin{pmatrix}\beta_1^T&{\bf 0}_{1\times 4}\\{\bf 0}_{1\times 4}&\beta_1^T\end{pmatrix}\in\R^{4\times 8}.
\]
Combining \eqref{q1} and \eqref{q2}, let $V^{(3)}=\beta_1^TV^{(2)}$ be the output layer, then 
\begin{equation}\label{q3}
zts=V^{(3)}=\beta_1^T\sigma_2(A^{(2)}\sigma_2(A^{(1)}V^{(0)}+c^{(1)})).
\end{equation}
In other words, we can use a neural network of two hidden layers, and one output layer to exactly represent $zts$. We only need $8$ neurons in the first hidden layer and $4$ neurons in the second hidden layer, see Figure~\ref{fig2}. 
\begin{figure}[ht]
\begin{center}
    \begin{tikzpicture}[
        roundnode/.style={circle, draw=black, minimum size=6mm, inner sep=0pt, align=center},
        scale=0.8
    ]
        % Input Layer (centered)
            \node[roundnode, fill=blue!10] (I1) at (0,-2) {$z$};
            \node[roundnode, fill=blue!10] (I2) at (0,-3) {$t$};
            \node[roundnode, fill=blue!10] (I3) at (0,-4) {$s$};

        % First Hidden Layer
        \foreach \y in {1,2,3,4,5,6,7,8} {
            \node[roundnode, fill=yellow!20] (H1\y) at (2.5,-\y+0.5) {};
        }

        % Second Hidden Layer
        \foreach \y in {1,2,3,4} {
            \node[roundnode, fill=red!15] (H2\y) at (5,-\y-0.5) {};
        }

        % Output Layer
        \node[roundnode, fill=green!8] (O) at (7.5,-2.5) {$zts$};

        % Connections
        \foreach \i in {1,2,3} {
            \foreach \h in {1,2,3,4,5,6,7,8} {
                \draw (I\i) -- (H1\h);
            }
        }
        
        \foreach \h in {1,2,3,4,5,6,7,8} {
            \foreach \hnext in {1,2,3,4} {
                \draw (H1\h) -- (H2\hnext);
            }
        }
        
        \foreach \h in {1,2,3,4} {
            \draw (H2\h) -- (O);
        }
         \draw[dash pattern=on 6pt off 5pt, rounded corners=15pt] (1.6,-8.65) rectangle (5.7,0.2); 
          \node at (3.7,-8.3) {\scriptsize{Hidden Layers}};
    \end{tikzpicture}
\end{center}
\caption{Illustration of neural network for representing a polynomial of three variables}
\label{fig2}
\end{figure}

In view of \eqref{q1}, \eqref{q2}, \eqref{q3}, the first couple of hidden layers in the neural operator using ReQU given in Definition~\ref{def:neural-operator} could be designed as follows:

\medskip\noindent (H1): $U_j^{(1)}=\sigma_2(W^{(1)}U_j^{(0)}+K_N^{(1)}U_j^{(0)}+b^{(1)})$ with
\[
W^{(1)}=\begin{pmatrix}\omega_1&{\bf 0}_{4\times 1}&{\bf 0}_{4\times 1}\\{\bf 0}_{4\times 1}&\omega_1&\alpha_1\end{pmatrix},\; K_N^{(1)}=0,\; b^{(1)}=\begin{pmatrix}\alpha_1\\{\bf 0}_{4\times 1}\end{pmatrix},
\]
and also $D_j^{(1)}={\bf 0}_{1\times 8}$. Here $K_N^{(0)}=0$ means the null operator. 
\begin{comment}
\[
W^{(1)}=\begin{pmatrix}\omega_1&{\bf 0}&{\bf 0}\\{\bf 0}&\omega_1&\alpha_1\\{\bf 0}&\omega_1&{\bf 0}\\{\bf 0}&{\bf 0}&\omega_1\end{pmatrix},\; K_N^{(0)}=\text{\large{0}},\; b^{(1)}=\begin{pmatrix}\alpha_1\\{\bf 0}\\\alpha_1\\\alpha_1\end{pmatrix},
\]
and also $D_j^{(1)}=\text{\large{0}}$. Here $K_N^{(0)}=0$ stands for the null operator. 
\end{comment}

\medskip\noindent (H2): $U_j^{(2)}=\sigma_2(W^{(2)}U_j^{(1)}+K_N^{(2)}U_j^{(1)}+b^{(2)})$ with 
\[
W^{(2)}=\begin{pmatrix}\omega_1\beta_1^T&\alpha_1\beta_1^T\\{\bf 0}_{4\times 4}&\omega_1\beta_1^T\end{pmatrix}\in\R^{8\times 8},\; K_N^{(2)}=0,\; b^{(2)}=\begin{pmatrix}{\bf 0}_{4\times 1}\\\alpha_1\end{pmatrix},\; D_j^{(2)}={\bf 0}_{1\times 8}.
\]

\medskip\noindent (H3): $U_j^{(3)}=\sigma_2(W^{(3)}U_j^{(2)}+K_N^{(3)}U_j^{(2)}+b^{(3)})$, where
\[
K^{(3)}_{N}U_j^{(2)}(x) =
\sum_{1\le m,n\le N}c_{nm}^{(3)}\langle \Phi_{m}, U_j^{(2)} \rangle \Phi_{n}(x) 
\]
with 
\[
W^{(3)}=\begin{pmatrix}{\bf 0}_{4\times 1}\\\omega_1\end{pmatrix}\begin{pmatrix}{\bf 0}_{1\times 4}&\beta_1^T\end{pmatrix}=\begin{pmatrix}{\bf 0}_{4\times 4}&{\bf 0}_{4\times 4}\\{\bf 0}_{4\times 4}&\omega_1\beta_1^T\end{pmatrix}\in\R^{8\times 8},
\]
\[
c_{nm}^{(3)}=\omega_1\tilde c_{nm}^{(3)}\begin{pmatrix} \beta_1^T&{\bf 0}_{1\times 4}\end{pmatrix}\in\R^{4\times 8}\quad\mbox{for some learnable parameter}\quad\tilde c_{nm}^{(3)}\in\R,
\]
and
\[
b^{(3)}=\begin{pmatrix}\alpha_1\\\alpha_1\end{pmatrix}.
\]
It is helpful to point out that 
\[
U_j^{(3)}\approx\sigma_2\begin{pmatrix}\omega_1\left(\Psi_{q,N}^{[1]}(v_j^{(0)})\right)+\alpha_1\\\omega_1(\gamma_2\Delta\gamma_1)+\alpha_1\end{pmatrix}.
\]
Furthermore, we have
\[
\sum_{\ell=1}^3X_j^{(\ell)}=D_j^{(3)}U_j^{(3)}
\]
with
\[
D_j^{(3)}=\begin{pmatrix}\beta_1^T&{\bf 0}_{1\times 4}\end{pmatrix}\in\R^{1\times 8}.
\]

\medskip\noindent (H4): $U_j^{(4)}=\sigma_2(W^{(4)}U_j^{(3)}+K_N^{(4)}U_j^{(3)}+b^{(4)})$ with
\[
W^{(4)}=\begin{pmatrix}\omega_1\beta_1^T&\alpha_1\beta_1^T\\{\bf 0}_{4\times 4}&\omega_1\beta_1^T\end{pmatrix},\quad b^{(4)}=\begin{pmatrix}{\bf 0}_{4\times 1}\\\alpha_1\end{pmatrix},\quad K_N^{(4)}=0,
\]
and $D_j^{(4)}={\bf 0}_{1\times 8}$. In other words, (H4) has the same structure as (H2). 

Finally, it is not hard to see that (H5) has the same structure as (H3). Consequently, the rest of hidden layers could be designed alternate between (H2) and (H4). In view of the above construction, it follows that the iteration step $L$ in Proposition~\ref{prop1} and the depth of the layer ${\bf L}$ is related by ${\bf L}\approx 2L$ if we use ReQU as the activation function. The width (number of nodes) at each hidden layer is at most $8$.  We are now ready to prove the main result of the paper. 

\begin{theorem}{\rm[Quantitative Approximation Theorem]}\label{thm:quantitative-UAT}
Assume that $d=2,3$, $r>0$, and $s>2+d/2$, and fix
$\eta_0\in(0,1)$.
For every $\eps\in(0,1)$ there exist a depth ${\bf L}$,
a width $\bfd$, ranks $N=(N_1,N_2,N_3)$ and $J$, and
${\mathcal G}\in\mathcal{NO}^{{\mathbf L},\bfd,\sigma_2}_{N,J,\phi,\Phi}$
such that
\begin{equation}\label{neuralapp}
\sup_{\gamma\in{\mathcal S}^{M,\eta_0}_{m,B'}}
\|\tilde\Lambda_\gamma-{\mathcal G}(\gamma)\|_{\H_r}
\le\eps.
\end{equation}
Moreover,
\begin{equation}\label{LG}
{\bf L}({\mathcal G})
\le C_1\bigl(1+\log(\eps^{-1})\bigr)
\end{equation}
and
\begin{equation}\label{dG}
d_\ell({\mathcal G})\le C_2.
\end{equation}
Here $C_1$ depends only on $d,B,B',s,r,m,M,\eta_0$,
and $C_2$ is an absolute constant (the channel width per branch).
For any fixed $t\in(0,2-d/2)$, the ranks can be chosen with
\[
N_1=O(\eps^{-2d/t}),\qquad
N_2=O(\eps^{-\kappa_1}),\qquad
N_3=O(\eps^{-\kappa_2}),\qquad
J=O(\eps^{-1/2}),
\]
where $\kappa_1,\kappa_2$ are those in \eqref{IJ}.
The implied constants are uniform in
$\gamma\in{\mathcal S}^{M,\eta_0}_{m,B'}$ and may depend on
the fixed a priori parameters and on $t$.
\end{theorem}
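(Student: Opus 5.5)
We split the error by the triangle inequality in $\H_r$:
\[
\|\tilde\Lambda_\gamma-{\mathcal G}(\gamma)\|_{\H_r}\le \|\tilde\Lambda_\gamma-\pi_{JJ}\tilde\Lambda_\gamma\|_{\H_r}+\|\pi_{JJ}\tilde\Lambda_\gamma-{\mathcal G}(\gamma)\|_{\H_r}.
\]
For the first term we use \eqref{est2} with $K=J$, $\nu=2$. Since ${\mathcal S}^{M}_{m,B'}\subset C^2(\bar B)$ with a uniform $L^\infty$ bound, the constant $CM/m$ is uniform. Choosing $J$ as in \eqref{J} makes this term at most $\eps/2$ and gives $J=O(\eps^{-1/2})$. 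Because $\{b^{(r)}_{jk}\}$ is orthonormal in $\H_r$, the second term equals
\[
\Big(\sum_{j,k\le J}\big|c^{(r)}_{jk}(\gamma)-(1+\lambda_j)^{-r/2}(\Gamma^{(1)}_{jk}+\Gamma^{(2)}_{jk})\big|^2\Big)^{1/2}.
\]
By \eqref{cjkgamma}, $c^{(r)}_{jk}$ is the sum of the iterative term and $a^{(r)}_{jk}$. We therefore match $\Gamma^{(2)}_{jk}$ with $a^{(r)}_{jk}$ and $\Gamma^{(1)}_{jk}$ with the iterative term, and bound each sum of squares by $C\eps^2$. Replacing $\eps$ by $\eps/C'$ at the start then gives \eqref{neuralapp}.

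For $\Gamma^{(2)}$ we set $N_2=I_1$ and $N_3=I_2$ from Proposition~\ref{prop0}, which applies since $s>2+d/2>1+d/2$. We take $E_{n,j,k}=-\int_B\Phi_n v_j^{(0)}\nabla v_k^{(0)}\,dx$ and $F_{n,j,k}=\int_B\Phi_n\nabla v_j^{(0)}\cdot\nabla v_k^{(0)}\,dx$. Then \eqref{err0} gives exactly the needed bound. The constant depends on $\|\gamma\|_{H^s}\le M$ and $m$, so it is uniform over the class, and it also controls $\|\gamma^{\pm1/2}\|_{H^s}$ through a composition estimate.

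For $\Gamma^{(1)}$ we use Proposition~\ref{prop1} with $N=N_1=O(\eps^{-2d/t})$ and $L$ given by \eqref{depth}. The hypotheses hold uniformly on ${\mathcal S}^{M,\eta_0}_{m,B'}$ by the definition of that class. It remains to show that the hidden layers realize $\sum_{i}X_j^{(i)}=-2\sum_{\ell=1}^L\Psi^{[\ell]}_{q,N}[v_j^{(0)}]$ \emph{exactly}. We do this with the explicit ReQU construction (H1)--(H5):
\begin{itemize}
\item ReQU represents identity and products exactly by \eqref{q1}, \eqref{q2}, \eqref{q3}.
\item Each pair of layers (H2)/(H4) followed by (H3)/(H5) takes the current iterate $z^{(\ell)}$ and $q=\gamma_2\Delta\gamma_1$ as input. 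It forms $qz^{(\ell)}$ pointwise and then applies $K_N$ with coefficients $\tilde c_{nm}=c_{nm}$, the Green coefficients in \eqref{qN}. This outputs $\Psi_{q,N}[z^{(\ell)}]$ encoded as $\sigma_2(\omega_1\cdot+\alpha_1)$, while the $q$-channel is propagated.
\item The readout $D_j=-2(\beta_1^T,{\bf 0})$ on the odd layers recovers each iterate, scaled by $-2$.
\end{itemize}
Then $\Gamma^{(1)}_{jk}$ equals the truncated integral in \eqref{psin}, and \eqref{psin} gives the bound $C\eps^2$. This construction yields ${\bf L}\le 2L+1\le C_1(1+\log\eps^{-1})$, which is \eqref{LG}, and width at most $8$, which is \eqref{dG}.

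The main obstacle is the exactness of this layer bookkeeping. The first issue is the inductive claim that the quadratic encodings are undone correctly: the $\beta_1^T$ factors must be absorbed into $W$ and $c_{nm}$, and the bias $\alpha_1$ must not pollute the nonlocal term. Since $K_N$ acts on the encoded vector, a constant term $\la\Phi_m,\alpha_1\ra$ could appear. We would handle this by checking that the combination $\omega_1\beta_1^T$ applied to the $\sigma_2$-encoding returns the plain value before $K_N$ is applied, so that no constant term survives. We would also track the scaling $(1+\lambda_j)^{-r/2}$ in the sums, which is harmless since $r>0$ makes it at most $1$. Finally, we must confirm that all constants, including those in \eqref{estw20} and \eqref{estw21}, are independent of $\gamma$ within ${\mathcal S}^{M,\eta_0}_{m,B'}$; this uses $\|\nabla\gamma^{1/2}\|_{L^\infty}\le C(m,M)$ by Sobolev embedding.
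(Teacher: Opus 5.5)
Your proposal is correct and follows essentially the same route as the paper's proof: the low-rank truncation \eqref{error1} with $J=O(\eps^{-1/2})$, Proposition~\ref{prop0} for the $\Gamma^{(2)}$ part, and Proposition~\ref{prop1} with the exact ReQU realization (H1)--(H5) for the $\Gamma^{(1)}$ part. Like the paper, you also use uniformity of all constants on ${\mathcal S}^{M,\eta_0}_{m,B'}$, a rescaled internal tolerance, and the depth count ${\bf L}\le 2L+1$. Your explicit factor $-2$ in the readout $D_j$, and your check that the nonlocal layer only sees the decoded product $qz^{(\ell)}$ so no bias enters $K_N$, are bookkeeping the paper leaves implicit; its displayed $D_j^{(3)}=(\beta_1^T,{\bf 0}_{1\times 4})$ omits the $-2$.
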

\begin{proof}
All conductivities in ${\mathcal S}^{M,\eta_0}_{m,B'}$ have the
same lower bound $m$, a common $H^s(B)$ bound, and hence a
common $W^{2,\infty}(B)$ bound.
Moreover, the contraction constants of both the exact and
truncated Picard operators are bounded above by $\eta_0<1$.
Thus the constants in \eqref{error1}, Proposition~\ref{prop0},
and Proposition~\ref{prop1} can be chosen uniformly on this class.\\
Apply these approximation estimates with an internal tolerance
$\tau\in(0,1)$, using the same ranks and iteration depth for all
conductivities in ${\mathcal S}^{M,\eta_0}_{m,B'}$.
Combining \eqref{dncoeff}, \eqref{dnmc}, \eqref{err0}, and
\eqref{psin} with \eqref{error1}, and using the ReQU realization
described above, gives an operator $\mathcal G_\tau$ with
\[
\sup_{\gamma\in{\mathcal S}^{M,\eta_0}_{m,B'}}
\|\tilde\Lambda_\gamma-\mathcal G_\tau(\gamma)\|_{\H_r}
\le C_*\tau,
\]
where $C_*\ge1$ depends only on the fixed a priori parameters.
Choose $\tau=\eps/C_*$.
This proves \eqref{neuralapp}, without changing the powers of
$\eps$ in the rank estimates.\\
By \eqref{depth},
\[
L=\left\lceil
\frac{2\log(\tau^{-1})}{\log(\eta_0^{-1})}
\right\rceil.
\]
The ReQU realization uses at most $2L+1$ hidden layers, giving
\eqref{LG}. The channel width per branch is bounded independently
of $\eps$, as in the construction above.
\end{proof}

\begin{remark}\label{rem6}
As already pointed out in (iii) of Remark~\ref{remaa}, ranks $N_1, N_2, J$ are a priori determined by $\eps$ and $J$ behaves like $\eps^{-1/2}$. To achieve the desired accuracy $\eps$, the depth of neural operators grows only \emph{logarithmically} in $\eps^{-1}$. Furthermore, the architecture of neural operators given in Definition~\ref{def:neural-operator} satisfies the discretization-invariance property, which is explained in detail in Appendix~\ref{A2}.  
\end{remark}

\section{Application to the inverse problem}

In this section, we describe how to use the neural operator $\mathcal{G}$, constructed above, to solve the inverse problem of recovering the conductivity function $\gamma$ from the normalized DN map $\tilde{\Lambda}_\gamma$. This inverse boundary value problem, also known as \emph{Calderón's problem} or \emph{electrical impedance tomography}, has been extensively studied. Fundamental questions such as uniqueness, stability, and reconstruction methods have been addressed; see, for example, \cite{feldman2019calderon}. 

Here, we are interested in reconstructing the conductivity function $\gamma$ from  $\tilde{\Lambda}_\gamma$ using the Ensemble Kalman Inversion (EnKI) method \cite{iglesias2013ensemble, schillings2017analysis, schillings2018convergence}. EnKI is a derivative-free Bayesian inference algorithm that iteratively updates an ensemble of parameter samples based on observational data. In principle, each prediction step requires evaluating the forward map $\gamma \mapsto \tilde{\Lambda}_\gamma$, which can be computationally expensive due to the need to solve a PDE for each ensemble member. To address this issue, we replace the exact forward operator with a learned neural operator surrogate $\mathcal{G}$ to reduce computational cost. The numerical results in Section~\ref{simulation} suggest that this substitution maintains reconstruction accuracy in the examples considered. However, we do not establish a theoretical bound on how the forward approximation error propagates through EnKI to the reconstructed conductivity. For applications of neural networks to MCMC methods for Bayesian inverse problems, see, for example, \cite{majee2025mcmc,yan2017convergence}.

We now provide a detailed description of the EnKI method. To begin, we consider a lower rank approximator of $\tilde\Lambda_\gamma$, i.e.,
\[
\pi_{KK}\tilde\Lambda_\gamma=\sum_{j,k=1}^Kc_{jk}^{(r)}(\gamma)b_{jk}^{(r)}
\]
for some given $K\in\N$, see \eqref{pjj}. Accordingly, we represent the truncated map $\pi_{KK} \tilde{\Lambda}_\gamma$ by the $K \times K$ coefficient matrix $\big(c_{jk}^{(r)}(\gamma)\big)$. We denote its surrogate, constructed using a neural operator, by ${\mathcal{G}}(\gamma)$. The inverse problem is then formulated as a Bayesian inference problem with noisy observations given by
\begin{equation} \label{data}
y = \G(\gamma) + \eta,
\end{equation}
where $\eta \sim \mathcal{N}(0, \Xi)$ is a mean-zero Gaussian random variable with known covariance $\Xi$ defined on the observation space $Y:=\R^{K\times K}$. For simplicity, we consider a smooth conductivity $\gamma$ in what follows.  

In order to apply $\mathcal{G}(\gamma)$ at each iteration of the EnKI algorithm, it is necessary to ensure that the conductivity field $\gamma$ is strictly positive and that the perturbation $\gamma - 1$ is compactly supported within the domain $B$. To achieve this, we define $\mu$ to be a Gaussian random field on $\mathbb{R}^d$ with a suitable Whittle-Mat\'ern covariance kernel, and introduce a smooth cut-off function $\chi(x)$ satisfying $\chi(x) = 1$ for $x \in B'$ and $\chi(x) = 0$ for $x \in \mathbb{R}^d \setminus B''$, where $B' \subset B'' \subset B$ are nested subdomains. This construction ensures that the random field is spatially localized and the resulting conductivity field satisfies the required positivity and support conditions. We then define the conductivity field by
\begin{equation}\label{link}
\gamma(x) = \exp\left( \chi(x) \sigma(x) \right)
\end{equation}
with $\sigma\sim \mu$, which guarantees that $\gamma(x) > 0$ for all $x \in {\mathbb R}^d$ and that $\gamma(x) = 1$ outside of $B''$.

Let $\sigma\sim\mu$ and define $\tilde\G(\sigma)=\G(\exp(\chi\sigma))$. We next define ${\mathcal{F}}: Z \to Z$ by
\[
\mathcal{F}(z) = 
\begin{pmatrix}
\sigma \\
\tilde{\G}(\sigma)
\end{pmatrix}
\quad \text{for} \quad 
z = 
\begin{pmatrix}
\sigma \\
y
\end{pmatrix}
\in Z,
\]
where $\sigma\in C^{\infty}(\R^d)$ and $y\in\R^{K\times K}$, and the corresponding artificial dynamical system 
\begin{equation}\label{dynamics}
z_{k+1}={\mathcal F}(z_k).
\end{equation}
Related to the artificial dynamical, \eqref{data} can be also expressed as
\begin{equation}\label{data1}
y_{k+1}={\mathcal H}z_{k+1}+\eta_{k+1},
\end{equation}
where the projection operator ${\mathcal H}:Z\to Y$ is defined by ${\mathcal H}=(0,I)$ and $\{\eta_k\}$ is an iid sequence with $\eta_1\sim N(0,\Gamma)$ and $\Gamma$ is given above. 

The EnKI algorithm begins with an initial (or first guess) ensemble of particles $\{z_0^{(j)}\}_{j=1}^J$ which will be iteratively updated with the ensemble Kalman filter method. The initial ensemble $\{z_0^{(j)}\}_{j=1}^J$ can be constructed by choosing an ensemble $\{\sigma_0^{(j)}\}_{j=1}^J$ with $\sigma_0^{(j)}\overset{\text{\scriptsize iid}}{\sim}\mu$ and setting
\[
z_0^{(j)}=\begin{pmatrix}\sigma_0^{(j)}\\\tilde\G(\sigma_0^{(j)})\end{pmatrix},\quad j=1,\cdots,J.
\]
We now describe the iterative ensemble Kalman filter method for \eqref{dynamics}, \eqref{data1}.

\smallskip
\noindent{\bf Algorithm}. Let $\{z_0^{(j)}\}_{j=1}^J$ be the initial ensemble. For $k=0,1,2,\cdots$
\begin{enumerate}
\item[(i)]{\it Prediction step}. We first propagate particles under the dynamics \eqref{dynamics}:
\[
\widetilde{z}^{(j)}_{k+1}=\begin{pmatrix}\widetilde{\sigma}^{(j)}_{k+1}\\\widetilde y^{(j)}_{k+1}\end{pmatrix}={\mathcal F}(z_{k}^{(j)})=\begin{pmatrix}{\sigma}^{(j)}_{k}\\{\tilde\G}({\sigma}^{(j)}_{k})\end{pmatrix},\quad j=1,\cdots,J.
\]
Compute the empirical mean and covariance:
\[
\begin{aligned}
&\widehat z_{k+1}=\frac 1J\sum_{j=1}^J\widetilde{z}^{(j)}_{k+1}\\
&{\mathcal C}_{k+1}=\frac 1J\sum_{j=1}^J\widetilde{z}^{(j)}_{k+1}(\widetilde{z}^{(j)}_{k+1})^T-\widehat z_{k+1}(\widehat z_{k+1})^T.
\end{aligned}
\]
\item[(ii)]{\it Analysis step}. Define the \emph{Kalman gain} ${\mathcal K}_{k+1}$ by
\[
{\mathcal K}_{k+1}={\mathcal C}_{k+1}{\mathcal H}^T({\mathcal H}{\mathcal C}_{k+1}{\mathcal H}^T+\Gamma)^{-1}.
\]
Update each ensemble particle by
\[
z_{k+1}^{(j)}=\widetilde{z}^{(j)}_{k+1}+{\mathcal K}_{k+1}(y_{k+1}^{(j)}-{\mathcal H}\widetilde{z}^{(j)}_{k+1})=(I-{\mathcal K}_{k+1}{\mathcal H})\widetilde{z}^{(j)}_{k+1}+{\mathcal K}_{k+1}y_{k+1}^{(j)},
\]
where $y_{k+1}^{(j)}=y+\eta^{(j)}_{k+1}$ with $\eta_{k+1}^{(j)}\sim N(0,\Gamma)$. Here $y$ is the observation data and $y_{k+1}^{(j)}$ is obtained by artificially perturbing the observation data, see \eqref{data1}.

\item[(iii)] Finally, we compute the mean of the parameter update
\[
\bar \sigma_{k+1}=\frac 1J\sum_{j=1}^J{\mathcal H}^\perp z_{k+1}^{(j)}=\frac 1J\sum_{j=1}^J \sigma_{k+1}^{(j)},
\]
where ${\mathcal H}^\perp=[I,0]$. Then $\gamma_{k+1}=\exp(\bar\sigma_{k+1})$ is a refined estimate of the unknown conductivity $\gamma$. 
\end{enumerate}
It is important to point out that  for every $(k,j)\in{\mathbb N}\times\{1,2,\cdots,J\}$ we have $\sigma_{k+1}^{(j)}\in{\mathcal A}$ and hence $\bar \sigma_{k+1}\in{\mathcal A}$ for all $k\in{\mathbb N}$, where ${\mathcal A}=\text{span}\{\sigma_0^{(j)}, j=1,\cdots,J\}$, see \cite[Theorem~2.1]{iglesias2013ensemble}. 

\section{Numerical simulations}\label{simulation}

\subsection{Simulation of the DN map}

We generated a total of 4,000 samples from the DN map, allocating 3,200 samples for training and 800 for testing. The modeling task focuses on predicting the projection coefficients $c_{jk}^{(0)}$, i.e., the coefficients of the low-rank approximation introduced in \eqref{pjj} with $r=0$, based on four input variables denoted by $U^{(0)}_j$, as described in Section~\ref{sec3} (Lifting). As a preliminary check for dataset consistency, we examined the statistical distribution of the output coefficients. The training set has a mean of 353.43 and a standard deviation of 45.36, while the testing set shows a closely aligned mean of 351.90 and standard deviation of 46.75. This statistical agreement ensures consistent training and evaluation conditions, which is essential for reliable generalization and model robustness.

For model development, we performed empirical tuning of key hyperparameters using a trial-and-error approach. Since the learning rate is one of the most critical hyperparameters that directly influences both the speed and stability of convergence, it must be carefully selected to ensure effective training. A value that is too high may cause the optimization to diverge, whereas a value that is too low can result in excessively slow learning or entrapment in local minima. The Adam optimizer was selected for its adaptive learning rate mechanism and momentum-based updates, which enable faster and more stable convergence compared to traditional stochastic gradient descent. To evaluate its impact, the network was trained with five different learning rates:
: 10\textsuperscript{-4}, 10\textsuperscript{-3}, 
5*10\textsuperscript{-3}, 10\textsuperscript{-2}, and 10\textsuperscript{-1}. As shown in Figure~\ref{learning}(a), a learning rate of 0.01 achieved the lowest Mean Absolute Percentage Error (MAPE) of 11.11\%. This rate offered a good balance between convergence speed and stability. In contrast, an intermediate learning rate (0.01) led to suboptimal convergence, while larger rates (0.1) induced training instability and overshooting.

\begin{figure}
    \centering
    \includegraphics[width=1\linewidth]{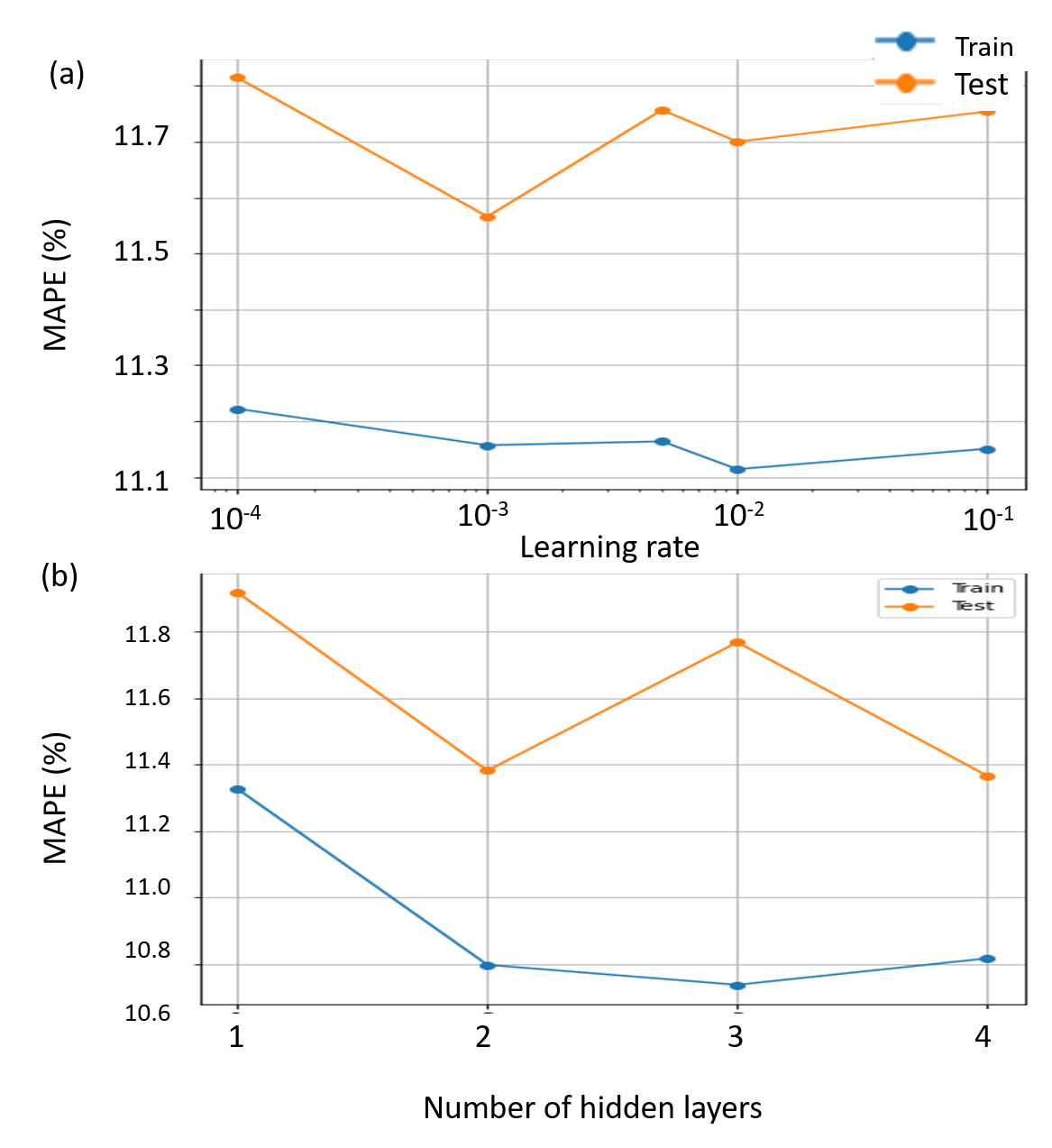}
    \caption{Model performance comparison with (a) different learning rates and (b) varying numbers of hidden layers.}
    \label{learning}
\end{figure}

After analyzing the impact of learning rate, we further examined another key architectural factor—the model depth. Optimizing the number of hidden layers is crucial because it directly affects the model’s representational capacity and its ability to capture nonlinear patterns in data. To investigate this effect, we tested architectures with one, two, three, and four hidden layers, as shown in Figure~\ref{learning}(b). Interestingly, the three-hidden-layer model again achieved the lowest MAPE (10.54\%), outperforming deeper architectures. This indicates that while increasing depth from one or two hidden layers to three improves the model's capacity to capture the underlying patterns in the data, further increasing to four hidden layers results in diminished performance. The performance drop with four layers suggests that excessive depth may introduce overfitting, particularly given the limited size of our dataset (4000 samples). This finding aligns with the principle of Occam’s Razor in machine learning, which favors simpler models that generalize well when they sufficiently explain the data.

These experiments underscore the importance of deliberate hyper-parameter selection and architecture design in physics-informed machine learning. In our case, incorporating domain knowledge, such as the radial symmetry in voltage/current distributions and smoothness of the projection map, helped avoid overfitting and improved training efficiency. Future improvements may involve incorporating regularization or physics-informed loss functions to further enhance generalization, particularly in more complex geometries or under higher-resolution settings.

To gain further insight into data structure and boundary effects, we analyzed six representative samples under boundary conditions defined by $\sin\left(\frac{k\theta}{\sqrt{\pi}}\right)$ and $\cos\left(\frac{k\theta}{\sqrt{\pi}}\right)$ for $k = 1, 2, 3$. Figures~\ref{fig:sin_bcs} and~\ref{fig:cos_bcs} display corresponding voltage and current distributions. Each subfigure illustrates how increasing the harmonic index $k$ influences spatial profiles. Notably, voltage distributions exhibit strong radial symmetry, a consequence of the circular domain. Current profiles, shaped by Neumann boundary conditions, also exhibit symmetry but with larger variations near the boundary. For higher-frequency inputs (e.g., $k=3$), steeper voltage gradients cause increased current magnitudes near the edges, evident in Figures~\ref{fig:sin_bcs}(iii) and~\ref{fig:cos_bcs}(iii).

\begin{figure}
    \centering
    \includegraphics[width=0.8\linewidth]{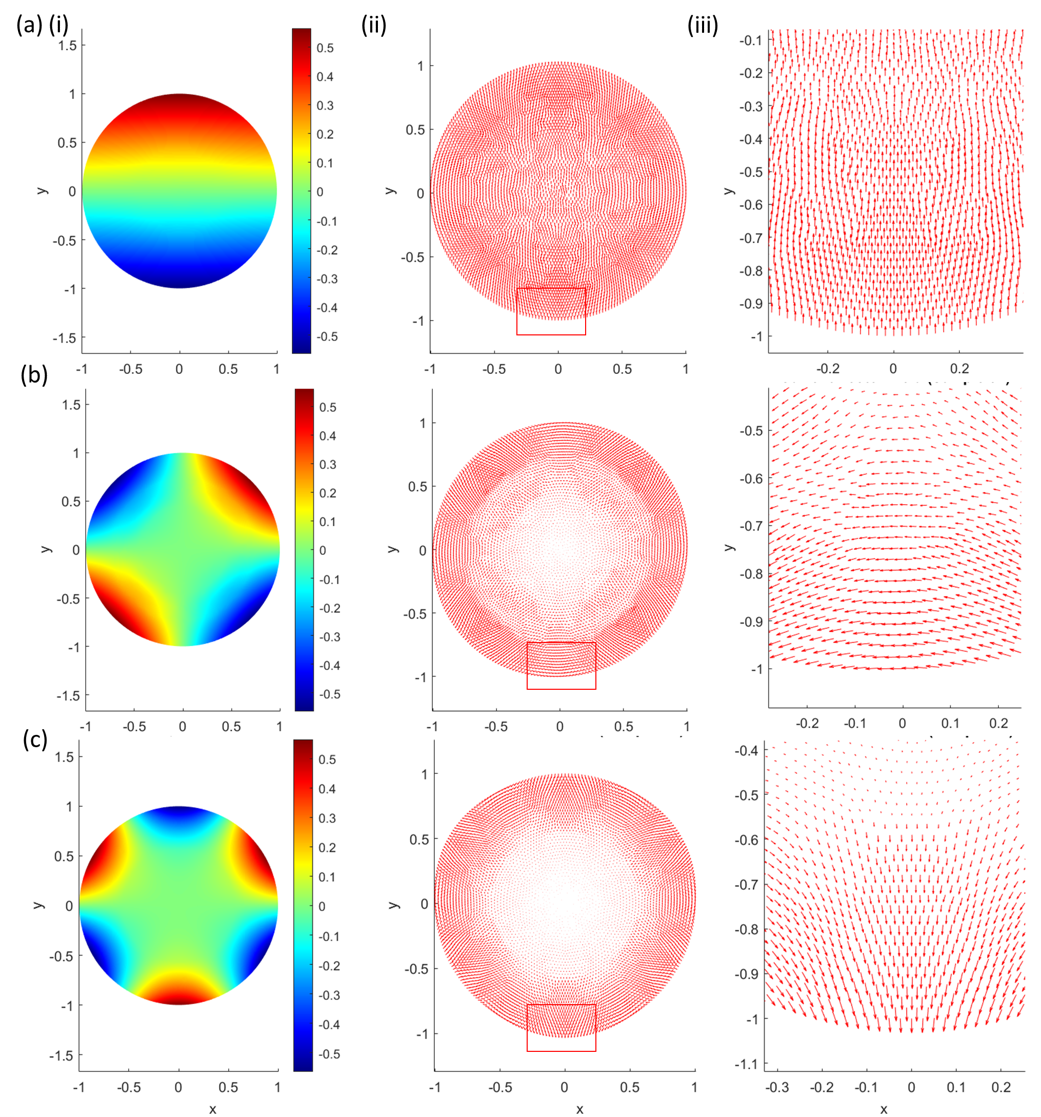}
    \caption{Voltage and current distributions with boundary conditions given by $\sin\left(\frac{k\theta}{\sqrt{\pi}}\right)$  for $k=$ (a) 1, (b) 2, and (c) 3. Subfigures: (i) voltage distributions, (ii) current distributions, and (iii) magnified view of the current distributions near the boundary.}
    \label{fig:sin_bcs}
\end{figure}

\begin{figure}
    \centering
    \includegraphics[width=0.8\linewidth]{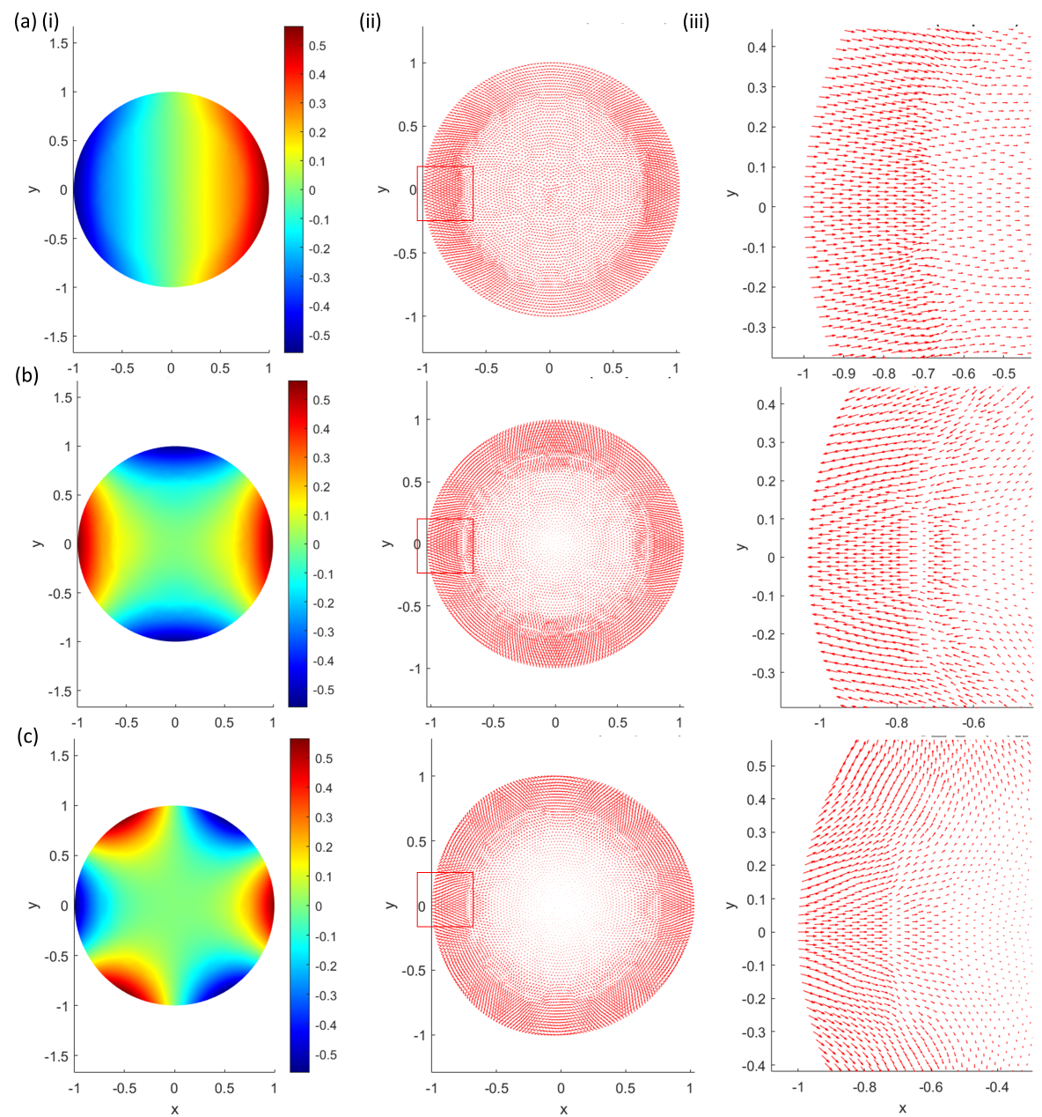}
    \caption{Voltage and current distributions with boundary conditions given by $\cos\left(\frac{k\theta}{\sqrt{\pi}}\right)$ for $k=$ (a) 1, (b) 2, and (c) 3. Subfigures: (i) voltage distributions, (ii) current distributions, and (iii) magnified view of the current distributions near the boundary.}
    \label{fig:cos_bcs}
\end{figure}

Table~\ref{tab:prediction-performance} summarizes model performance in predicting the projection coefficient~$c$. Although predictions (352.50-351.50) consistently underestimate ground truth values (281.99-437.33), the error is more manageable for lower target values (e.g., Figs.~\ref{fig:sin_bcs}(b) and~\ref{fig:cos_bcs}(c)), where the MAE ranges between 10.48 and 25.33 and the MAPE remains around 2.89-6.72\%. This consistent underestimation indicates a systematic bias in the model, which could potentially be mitigated through the incorporation of additional input features.

\begin{table}
\renewcommand{\arraystretch}{1.5}
\centering
\begin{tabular}{c|c|c|c|c}
\hline
Sample & True Value & Predicted Value & MAE & MAPE (\%) \\
\hline \hline
Fig. 4(a) & 437.33 & 352.50 & 84.83 & 19.40 \\
Fig. 5(a) & 281.99 & 352.50 & 70.51 & 25.00 \\
Fig. 4(b) & 409.43 & 351.75 & 57.68 & 14.09 \\
Fig. 5(b) & 362.23 & 351.75 & 10.48 & 2.89 \\
Fig. 4(c) & 387.73 & 351.50 & 36.23 & 9.34 \\
Fig. 5(c) & 376.83 & 351.50 & 25.33 & 6.72 \\
\hline
\end{tabular}
\caption{Model prediction performance for the projection coefficient~$c$.}
\label{tab:prediction-performance}
\end{table}

In conclusion, the model produces stable and reasonably accurate estimates for projection coefficients in the mid-value range. While the absolute and relative errors point to improvement potential, their consistent behavior suggests that targeted strategies such as feature engineering, calibration layers, or ensemble methods, could enhance accuracy without significantly increasing computational cost. Future efforts should also prioritize robustness under complex boundary variations and scalability to high-resolution geometries.

%%%%%%%%%%%%
\subsection{Simulation of the inverse problem}
%%%%%%%%%%%%%%%%%%%%%%%%%%

%%%%%%%%%%%%%%%%%%%

Let us pick a ground truth $\sigma^\dagger\sim\mu$ and the corresponding conductivity is defined by $\gamma^\dagger(x)=\exp(\chi(x)\sigma^\dagger(x))$. Figure~\ref{fig:gamma}(a) shows the value of $\gamma^\dagger(x)$. The synthetic observation data is given by
\[
y^\dagger=(c_{jk}^{(0)}(\gamma^\dagger))_{j,k=1}^K\in\R^{K\times K}
\]
obtained by the FEM. To estimate the conductivity, we employ the Ensemble Kalman Inversion (EnKI) method based on the algorithm described above. In the \emph{prediction step}, the forward map ${\mathcal G}$ (or $\tilde{\mathcal G}$) is evaluated using both the PDE toolbox and the previously constructed ML algorithm. Figure~\ref{fig:gamma} presents the reconstruction results. The reconstruction using the ML algorithm, Fig.~\ref{fig:gamma}(c), achieves a mean absolute percentage error (MAPE) of 80.82\%, which is lower than the 88.50\% obtained using the PDE toolbox, Fig.~\ref{fig:gamma}(b). The magnitude of this reduction, though modest in a single sample, is consistent with the systematic performance gap observed in Table~\ref{tab:reconstructed-performance} below. The large MAPE arises from the fact that we use only a few coefficients $c_{jk}$ (we chose $K=3$ in simulation). We believe that the MAPE can be significantly reduced by incorporating more observation data $c_{jk}$.   
\begin{figure}[ht]
    \includegraphics[width=0.84\linewidth]{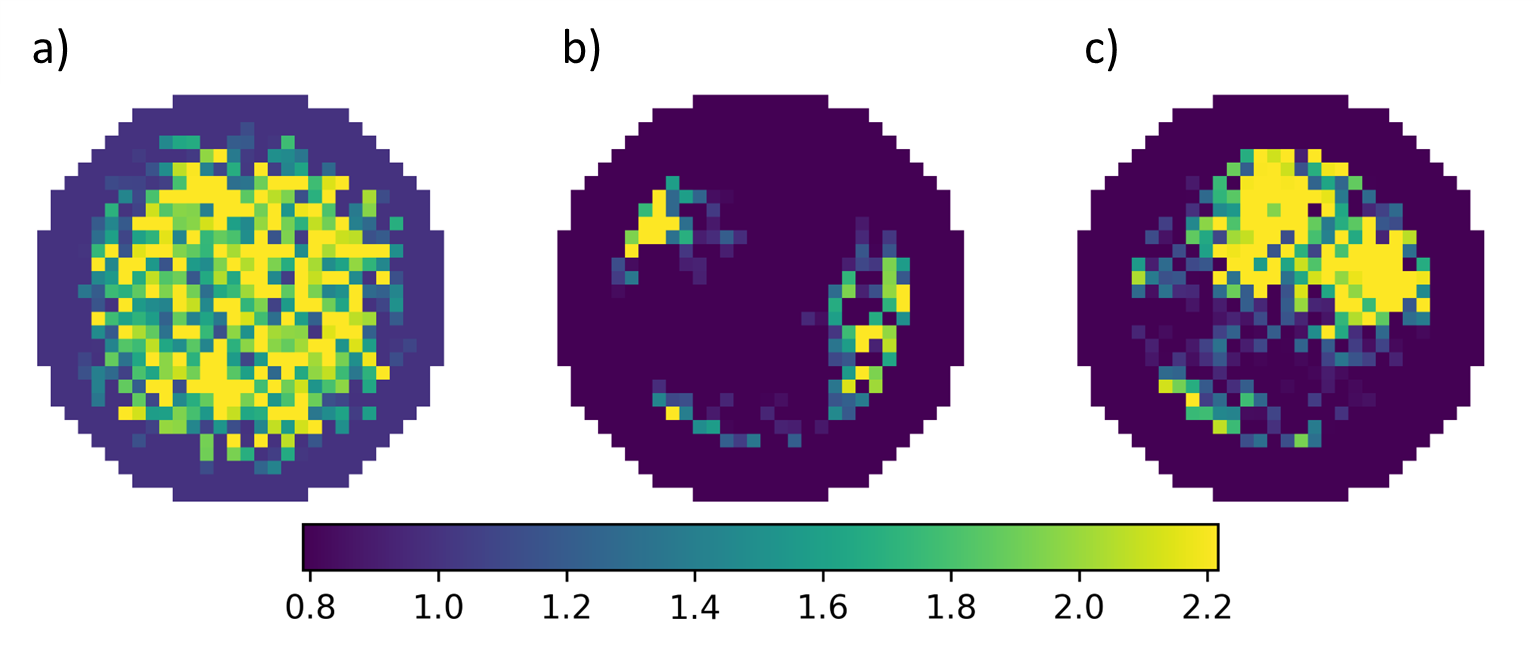}
    \centering
    \caption{EnKI method for a selected testing sample $\gamma^\dagger(x)$. (a) True coefficient; (b) Evaluation of the forward map by direct pde toolbox; (c) Evaluation of the forward map by ML.}
    \label{fig:gamma}
\end{figure}

To further demonstrate the superiority of the ML algorithm, we also apply the EnKI method to a large set of testing samples. The quantitative performance of the reconstructed coefficient fields is summarized in Table~\ref{tab:reconstructed-performance}. Our results show that reconstructions based on the ML algorithm consistently achieve lower MAE compared to those obtained using the direct PDE toolbox. This improvement is likely due to the smoother statistical properties of the ML-generated outputs, which help suppress discretization noise in the forward solution, providing more stable observational data for the inversion process. These findings highlight the robustness of the ML-assisted approach in producing reliable reconstructions and indicate that the inversion process benefits from ML-simulated forward models, which serve as higher-quality surrogates for observational data and enhance the stability of EnKI reconstructions. 
\begin{table}[ht!]
\centering
\begin{tabular}{lcccc}
\hline
 & MAE \\
\hline
Evaluation of the forward map by the direct pde toolbox & 0.87 \\
Evaluation of the forward map by the ML algorithm &0.72  \\
\hline
\end{tabular}
\caption{Performance Comparison of the EnKI Method}
\label{tab:reconstructed-performance}
\end{table}

%%%%%%%
\appendix

\section{Harmonic functions}\label{A1}

We consider eigenfunctions of the Laplace-Beltrami operator $\Delta_{\partial B}$ on the unit sphere in $\R^d$ with $d=2,3$. In $\R^3$, we have that, for $\ell\in\N\cup\{0\}$ and $m\in\Z$ satisfying $-\ell\le m\le\ell$,
\begin{equation}\label{r3}
Y_{\ell}^m(\theta,\phi)=
\left\{
\begin{aligned}
&Y_\ell^0(\theta, \phi) = N_\ell^0 P_\ell^0(\cos\theta)\quad\mbox{for}\quad m=0,\\
&Y_\ell^m(\theta, \phi) = \sqrt{2} N_\ell^m P_\ell^m(\cos\theta) \cos(m\phi)\quad\mbox{for}\quad m>0,\\
&Y_\ell^{-m}(\theta, \phi) = \sqrt{2} N_\ell^m P_\ell^m(\cos\theta) \sin(m\phi)\quad\mbox{for}\quad m<0,
\end{aligned}\right.
\end{equation}
are normalized eigenfunctions of $\Delta_{\partial B}$ corresponding to the eigenvalue $-\ell(\ell+1)$, where $0\le\theta<\pi$, $0\le\phi<2\pi$, the coefficients 
\[
N_\ell^m=\sqrt{\frac{(2\ell+1)}{4\pi}\frac{(\ell-m)!}{(\ell+m)!}}.
\]
We can flatten double indices $(\ell,m)$ into single indices $j$ by defining $j=\ell^2+(\ell+m)$, for $j\in\N\cup\{0\}$. Conversely, for any $j\in\N\cup\{0\}$, we can find a unique pair of $(\ell,m)$ with $\ell\in\N\cup\{0\}$ and $-\ell\le m\le \ell$ given by setting $\ell=\lfloor{\sqrt{j}}\rfloor$, i.e., $\ell^2\le j<(\ell+1)^2$, and $m=j-\ell^2-\ell$.

In $\R^2$, we have that for $\ell\in\N\cup\{0\}$
\begin{equation}\label{r2}
Y^m_\ell(\theta)=
\left\{
\begin{aligned}
&\frac{1}{\sqrt{2\pi}}\quad\mbox{for}\quad \ell=0,\\
&\frac{1}{\sqrt{\pi}}\cos(\ell\theta)\quad\mbox{for}\quad m=\ell>0,\\
&\frac{1}{\sqrt{\pi}}\sin(\ell\theta)\quad\mbox{for}\quad m=-\ell<0,
\end{aligned}\right.
\end{equation}
are normalized eigenfunctions of $\Delta_{\partial B}$ corresponding to the eigenvalue $-\ell^2$. Similarly, the double indices $(\ell,m)$ can be flattened into
a single index $j$ by
\[
j=
\begin{cases}
0, & \ell=0,\\
2\ell-1, & m=\ell,\quad \ell\ge1
\quad\text{(cosine terms)},\\
2\ell, & m=-\ell,\quad \ell\ge1
\quad\text{(sine terms)}.
\end{cases}
\]
Conversely, for $j\in\mathbb N\cup\{0\}$, we recover
$(\ell,m)$ as follows:
\[
(\ell,m)=
\begin{cases}
(0,0), & j=0,\\[1mm]
\left(\dfrac{j+1}{2},\dfrac{j+1}{2}\right),
& j\ge1 \text{ odd},\\[3mm]
\left(\dfrac{j}{2},-\dfrac{j}{2}\right),
& j\ge2 \text{ even}.
\end{cases}
\]
In other words, we can see that $\ell\sim j^{1/(d-1)}$ for $d=2,3$.

For simplicity, we use $\Theta$ to denote the spherical variables on $\S^1$ or $\S^2$. That is, $Y_\ell^m(\Theta)$ is given in \eqref{r3} or \eqref{r2} in $\R^3$ or $\R^2$. Therefore, defining
\begin{equation*}%\label{harmo}
\phi^{(0)}_j(x):=\frac{1}{R^{(d-1)/2}}Y_\ell^m(\Theta)
\end{equation*}
with $x=R\Theta$, where indices $j$ and $(\ell,m)$ are given above, $\{\phi_j^{(0)}: j\in\N\cup\{0\}\}$ forms an orthonormal basis of $L^2(\partial B)$. In particular, when $\partial B$ is a circle of radius $R>0$ centered at the origin, $\{\phi_k^{(0)}\}_{k\in\N\cup\{0\}}=\{\frac{1}{\sqrt{2\pi R}},\{\frac{1}{\sqrt{\pi R}}\cos (k\theta), \frac{1}{\sqrt{\pi R}}\sin (k\theta)\}_{k\in\N}\}$ forms an orthonormal basis of $L^2(\partial B)$. 

Now we consider the harmonic function in $B$ with $\phi_j^{(0)}$ on $\partial B$, i.e.,
\begin{equation}\label{harmo}
\left\{
\begin{aligned}
&\Delta w=0\quad\mbox{in}\quad B,\\
&w=\phi_j^{(0)}\quad\mbox{on}\quad\partial B.
\end{aligned}\right.
\end{equation}
Then $w$ can be written in terms of spherical coordinates (or polar coordinates)
\[
w_j(x)=w_j(\rho,\Theta):=\frac{\rho^\ell}{R^\ell}\phi_j^{(0)},\quad 0\le \rho<R.
\]
Now, since $B$ is the ball of radius $R$ centered at the origin, for Dirichlet conditions $\{\frac{1}{\sqrt{2\pi R}},\{\frac{1}{\sqrt{\pi R}}\cos (k\theta), \frac{1}{\sqrt{\pi R}}\sin (k\theta)\}_{k\in\N}\}$, the corresponding harmonic functions (in terms of polar coordinates) satisfy \eqref{harmo} are given by
\[
\left\{\frac{1}{\sqrt{2\pi R}}, \left\{\frac{1}{\sqrt{\pi R^{2k+1}}}\rho^k\cos(k\theta), \frac{1}{\sqrt{\pi R^{2k+1}}}\rho^k\sin(k\theta)\right\}_{k\in\N}\right\},\;\;0\le\rho<R,\;\;0\le\theta<2\pi.
\]

Now we would like to derive some estimates of $w_j$. We can write
\[
w_j(\rho,\Theta)
=R^{-(d-1)/2}\left(\frac{\rho}{R}\right)^\ell Y_\ell^m(\Theta).
\]
Using \(dx=\rho^{d-1}\,d\rho\,dS_\Theta\), we obtain
\[
\begin{aligned}
\|w_j\|_{L^2(B)}^2=R^{-(d-1)-2\ell}
\int_0^R \rho^{2\ell+d-1}\,d\rho
\int_{\S^{d-1}}|Y_\ell^m|^2\,dS_\Theta=\frac{R}{2\ell+d},
\end{aligned}
\]
that is,
\begin{equation}\label{estw20}
\|w_j\|_{L^2(B)}
=\left(\frac{R}{2\ell+d}\right)^{1/2}\le C_{R,d}j^{-1/(2d-2)}.
\end{equation}
For the gradient, harmonicity and Green's identity yield
\[
\|\nabla w_j\|_{L^2(B)}^2
=\int_{\partial B}w_j\,\partial_\nu w_j\,dS.
\]
Since
\[
\partial_\nu w_j=\frac{\ell}{R}w_j
\quad\text{on}\;\;\partial B,
\quad
\|w_j\|_{L^2(\partial B)}=1,
\]
we obtain
\begin{equation}\label{estw21}
\|\nabla w_j\|_{L^2(B)}
=\left(\frac{\ell}{R}\right)^{1/2}\le C_Rj^{1/(2d-2)}. 
\end{equation}
The latter identity also holds for $\ell=0$, when $w_j$ is constant.

\[
\begin{aligned}
\|w_j\|_{L^4(B)}^4=R^{-2(d-1)-4\ell}
\int_0^R \rho^{4\ell+d-1}\,d\rho
\int_{\S^{d-1}} |Y_\ell^m|^4\,dS_\Theta=\frac{R^{2-d}}{4\ell+d}
\|Y_\ell^m\|_{L^4(\S^{d-1})}^4,
\end{aligned}
\]
i.e.,
\[
\|w_j\|_{L^4(B)}
=\frac{R^{(2-d)/4}}{(4\ell+d)^{1/4}}
\|Y_\ell^m\|_{L^4(\S^{d-1})}.
\]

For the gradient, suppose \(\ell\geq1\). The polar-coordinate
formula gives
\[
\nabla w_j
=R^{-(d-1)/2-\ell}\rho^{\ell-1}
\left(\ell Y_\ell^m\,\Theta+\nabla_{\S^{d-1}}Y_\ell^m\right).
\]
Since \(\Theta\perp\nabla_{\S^{d-1}}Y_\ell^m\),
\[
|\nabla w_j|^4
=R^{-2(d-1)-4\ell}\rho^{4\ell-4}
\left(\ell^2|Y_\ell^m|^2+|\nabla_{\S^{d-1}}Y_\ell^m|^2\right)^2.
\]
Integrating yields
\[
\|\nabla w_j\|_{L^4(B)}
=
\frac{R^{-(d+2)/4}}{(4\ell+d-4)^{1/4}}
\left[
\int_{\S^{d-1}}
\left(
\ell^2|Y_\ell^m|^2
+|\nabla_{\S^{d-1}}Y_\ell^m|^2
\right)^2\,dS_\Theta
\right]^{1/4}.
\]
For \(\ell=0\), \(w_0\) is constant and \(\nabla w_0=0\).

To obtain bounds depending only on the degree, we use
\[
\begin{aligned}
\|Y_\ell^m\|_{L^4(\S^{d-1})}
&\leq C_d\ell^{\alpha_d},\\
\|\nabla_{\S^{d-1}}Y_\ell^m\|_{L^4(\S^{d-1})}
&\leq C_d\ell^{1+\alpha_d},
\end{aligned}
\qquad
\alpha_2=0,\quad \alpha_3=\frac18
\]
see \cite{sogge1988concerning} for $d=3$. The formulas above then imply
\[
\begin{aligned}
\|w_j\|_{L^4(B)}
&\leq C_{d,R}\ell^{\alpha_d-1/4},\\
\|\nabla w_j\|_{L^4(B)}
&\leq C_{d,R} \ell^{\alpha_d+3/4}.
\end{aligned}
\]
Substituting $\ell\sim j^{1/(d-1)}$, we obtain that
\begin{align}
&\|w_j\|_{L^4(B)}\leq C_{d,R}\,j^{(\alpha_d-1/4)/(d-1)}\label{estw1},\\
&\|\nabla w_j\|_{L^4(B)}\le C_{d,R}\,j^{(\alpha_d+3/4)/(d-1)}\label{estw2},
\end{align}
in particular, we have
\[
\begin{aligned}
d=2:\qquad
\|w_j\|_{L^4(B_R)}
&\leq Cj^{-1/4},\|\nabla w_j\|_{L^4(B_R)}\leq Cj^{3/4},\\[2mm]
d=3:\qquad
\|w_j\|_{L^4(B_R)}
&\leq Cj^{-1/16},\|\nabla w_j\|_{L^4(B_R)}\leq Cj^{7/16}.
\end{aligned}
\]

To construct a real orthonormal basis of $L^2(B)$, we can use the set of normalized Dirichlet eigenfunctions of $-\Delta$ on $B$. We focus on the $\R^2$ case, i.e., consider
\begin{equation}\label{2deig}
-\Delta u=\lambda u\;\;\text{in}\;\; B\;\;\text{and}\;\; u=0\;\;\text{on}\;\;\partial B.
\end{equation}
Putting the eigenvalue problem in polar coordinates, it is a well-known result that the eigenvalue problem can be solved using Bessel's functions. Precisely, for $k\ge1$ and $l\ge1$, normalized eigenfunctions are
\[
\psi_{kl}^{(1)}(r,\theta)
=
\frac{\sqrt{2}}
{\sqrt{\pi}\,R\,|J_{k+1}(j_{kl})|}
J_k\left(\frac{j_{kl}}{R}r\right)\cos(k\theta),
\]
and
\[
\psi_{kl}^{(2)}(r,\theta)
=
\frac{\sqrt{2}}
{\sqrt{\pi}\,R\,|J_{k+1}(j_{kl})|}
J_k\left(\frac{j_{kl}}{R}r\right)\sin(k\theta).
\]
For \( k = 0,\, l \geq 1 \),
\[
\psi_{0l}(r,\theta)
=
\frac{1}
{\sqrt{\pi}\,R\,|J_1(j_{0l})|}
J_0\left(\frac{j_{0l}}{R}r\right).
\]
Furthermore, each has associated eigenvalue
\[
\lambda_{kl} = \left( \frac{j_{kl}}{R} \right)^2
\]
for $k\ge 0$, $l\ge 1$. Here, \( J_k \) is Bessel function of the first kind of order \( k \) and \( j_{kl} \) is the \( l \)-th positive zero of \( J_k \), i.e., \( J_k(j_{kl}) = 0 \). For the computational purpose, we will use the following formulas of Bessel's functions 
\[
J_k(x) = \sum_{m=0}^{\infty} \frac{(-1)^m}{m! \, (k+m)!} \left( \frac{x}{2} \right)^{k+2m}
\]
for $k\ge 0$. To calculate the zeros of $J_k$, one can use \(\texttt{scipy.special.jn\_zeros}\) in Python.

\section{Discretization-invariance property}\label{A2}

Here we will establish that the architecture of neural operators in Definition~\ref{def:neural-operator} is discretization-invariant. Let ${\mathcal K}\subset{\mathcal S}^{M}_{m,B'}$ be compact (with respect to $W^{2,\infty}(B)$-norm). For example, we may take $\mathcal K$ to be a subset of ${\mathcal S}_{m,B'}^M$ which is uniformly bounded in
$W^{3,\infty}(B)$. For $\gamma\in{\mathcal K}$, let $\gamma^{(L)}$ be any discretized version of $\gamma$, for example, by projecting $\gamma$ to finite-dimensional spaces, e.g., using FEM or spectral method, and
\begin{equation}\label{discrete}
\sup_{\gamma\in{\mathcal K}}\|\gamma^{(L)}-\gamma\|_{W^{2,\infty}(B)}\to 0\quad\mbox{as}\quad L\to\infty.
\end{equation}
Now let ${\mathcal G}\in \mathcal{NO}^{{\mathbf L}, \bfd, \sigma}_{N, J, \phi, \Phi}$, then we need to show that
\begin{equation}\label{DI-1}
\sup_{\gamma\in{\mathcal K}}\|{\mathcal G}(\gamma^{(L)})-{\mathcal G}(\gamma)\|_{\H_r}\to 0\quad\mbox{as}\quad L\to\infty.
\end{equation}

To prove \eqref{DI-1}, first of all, it is straightforward to observe that for all $1\le j,k\le J$, $\Gamma_{jk}^{(1)}(\gamma^{(L)})$ and $\Gamma_{jk}^{(2)}(\gamma^{(L)})$ converge to $\Gamma_{jk}^{(1)}(\gamma)$ and $\Gamma_{jk}^{(2)}(\gamma)$ uniformly for $\gamma\in{\mathcal K}$, respectively. Since the activation function $\sigma$ is continuous and the multiplications by the weight matrices $W^{(\ell)}$ are local operations, it is enough to consider the integral operator $K_{N_1}^{(\ell)}$.  To this end, let us denote $q^{(L)}:=(\gamma^{(L)})^{-1/2}\Delta(\gamma^{(L)})^{1/2}$, then we need to show that for any fixed $N\in\N$
\[
\|\Psi_{q^{(L)},N}[u]-\Psi_{q,N}[u]\|_{L^2(B)}\to 0
\]
as $L\to\infty$, which is obvious by combining \eqref{qN} and the fact $\|q^{(L)}-q\|_{L^\infty(B)}\to 0$.

\end{sloppypar}

\bibliographystyle{custom} 
\bibliography{ref}

@article{yan2017convergence,
	author = {Yan, Liang and Zhang, Yuan-Xiang},
	journal = {Inverse Problems},
	number = {12},
	pages = {125001},
	publisher = {IOP Publishing},
	title = {Convergence analysis of surrogate-based methods for Bayesian inverse problems},
	volume = {33},
	year = {2017}}

@article{majee2025mcmc,
	author = {Majee, Sudeb and Abhishek, Anuj and Strauss, Thilo and Khan, Taufiquar},
	journal = {Inverse Problems},
	number = {9},
	pages = {095013},
	publisher = {IOP Publishing},
	title = {MCMC-Net: accelerating Markov Chain Monte Carlo with neural networks for inverse problems},
	volume = {41},
	year = {2025}}

@article{sogge1988concerning,
	author = {Sogge, Christopher D.},
	doi = {10.1016/0022-1236(88)90081-X},
	journal = {Journal of Functional Analysis},
	number = {1},
	pages = {123--138},
	publisher = {Elsevier},
	title = {Concerning the {$L^p$} norm of spectral clusters for second-order elliptic operators on compact manifolds},
	volume = {77},
	year = {1988}}

@article{abhishek2024deeponet,
	author = {Abhishek, Anuj and Strauss, Thilo},
	journal = {arXiv preprint arXiv:2407.17182},
	title = {A DeepONet for inverting the Neumann-to-Dirichlet Operator in Electrical Impedance Tomography: An approximation theoretic perspective and numerical results},
	year = {2024}}

@article{AlbSant1,
	author = {Alberti, Giovanni S. and Santacesaria, Matteo},
	doi = {10.1017/fms.2019.31},
	fjournal = {Forum of Mathematics. Sigma},
	issn = {2050-5094},
	journal = {Forum Math. Sigma},
	mrclass = {35R30 (35J10 42C40 94A20)},
	mrnumber = {4016502},
	mrreviewer = {Dinh Nho H\`ao},
	pages = {Paper No. e35, 20},
	title = {Calder\'on's inverse problem with a finite number of measurements},
	url = {https://doi.org/10.1017/fms.2019.31},
	volume = {7},
	year = {2019}}

@article{AlbSant2,
	author = {Alberti, Giovanni S. and Santacesaria, Matteo},
	doi = {10.1007/s00205-021-01718-4},
	fjournal = {Archive for Rational Mechanics and Analysis},
	issn = {0003-9527,1432-0673},
	journal = {Arch. Ration. Mech. Anal.},
	mrclass = {35R30 (47J06 49N45 78A46 94A20)},
	mrnumber = {4359448},
	number = {1},
	pages = {1--31},
	title = {Infinite-dimensional inverse problems with finite measurements},
	url = {https://doi.org/10.1007/s00205-021-01718-4},
	volume = {243},
	year = {2022}}

@article{AlbSant3,
	author = {Alberti, Giovanni S. and Santacesaria, Matteo},
	doi = {10.1080/00036811.2020.1745192},
	fjournal = {Applicable Analysis. An International Journal},
	issn = {0003-6811,1563-504X},
	journal = {Appl. Anal.},
	mrclass = {35R30 (35J10 42C40 94A20)},
	mrnumber = {4446595},
	number = {10},
	pages = {3636--3654},
	title = {Calder\'on's inverse problem with a finite number of measurements {II}: independent data},
	url = {https://doi.org/10.1080/00036811.2020.1745192},
	volume = {101},
	year = {2022}}

@article{AlbSant4,
	author = {Alberti, Giovanni S. and Arroyo, \'Angel and Santacesaria, Matteo},
	doi = {10.1088/1361-6544/aca73d},
	fjournal = {Nonlinearity},
	issn = {0951-7715,1361-6544},
	journal = {Nonlinearity},
	mrclass = {35R30 (58C25)},
	mrnumber = {4521957},
	number = {1},
	pages = {734--808},
	title = {Inverse problems on low-dimensional manifolds},
	url = {https://doi.org/10.1088/1361-6544/aca73d},
	volume = {36},
	year = {2023}}

@article{feldman2019calderon,
	author = {Feldman, Joel and Salo, Mikko and Uhlmann, Gunther},
	journal = {Preliminary notes on the book in preparation},
	title = {The Calder{\'o}n Problem--An Introduction to Inverse Problems},
	volume = {30},
	year = {2019}}

@article{schillings2018convergence,
	author = {Schillings, Claudia and Stuart, Andrew M},
	journal = {Applicable Analysis},
	number = {1},
	pages = {107--123},
	publisher = {Taylor \& Francis},
	title = {Convergence analysis of ensemble Kalman inversion: the linear, noisy case},
	volume = {97},
	year = {2018}}

@article{schillings2017analysis,
	author = {Schillings, Claudia and Stuart, Andrew M},
	journal = {SIAM Journal on Numerical Analysis},
	number = {3},
	pages = {1264--1290},
	publisher = {SIAM},
	title = {Analysis of the ensemble Kalman filter for inverse problems},
	volume = {55},
	year = {2017}}

@article{iglesias2013ensemble,
	author = {Iglesias, Marco A and Law, Kody JH and Stuart, Andrew M},
	journal = {Inverse Problems},
	number = {4},
	pages = {045001},
	publisher = {IOP Publishing},
	title = {Ensemble Kalman methods for inverse problems},
	volume = {29},
	year = {2013}}

@article{scaop2025,
	author = {Furuya, Takashi and Jenn-Nan Wang},
	journal = {preprint},
	title = {Quantitative approximation theorems for scattering neural operators},
	year = {2025}}

@article{lu2021learning,
	author = {Lu, Lu and Jin, Pengzhan and Pang, Guofei and Zhang, Zhongqiang and Karniadakis, George Em},
	journal = {Nature machine intelligence},
	number = {3},
	pages = {218--229},
	publisher = {Nature Publishing Group UK London},
	title = {Learning nonlinear operators via DeepONet based on the universal approximation theorem of operators},
	volume = {3},
	year = {2021}}

@inproceedings{anandkumar2020neural,
	author = {Anandkumar, Anima and Azizzadenesheli, Kamyar and Bhattacharya, Kaushik and Kovachki, Nikola and Li, Zongyi and Liu, Burigede and Stuart, Andrew},
	booktitle = {ICLR 2020 Workshop on Integration of Deep Neural Models and Differential Equations},
	title = {Neural operator: Graph kernel network for partial differential equations},
	year = {2020}}

@article{lanthaler2023operator,
	author = {Lanthaler, Samuel},
	journal = {Journal of Machine Learning Research},
	number = {318},
	pages = {1--67},
	title = {Operator learning with PCA-Net: upper and lower complexity bounds},
	volume = {24},
	year = {2023}}

@article{abraham2020statistical,
	author = {Abraham, Kweku and Nickl, Richard},
	journal = {Mathematical Statistics and Learning},
	number = {2},
	pages = {165--216},
	title = {On statistical {C}alder{\'o}n problems},
	volume = {2},
	year = {2020}}

@article{de2022cost,
	author = {de Hoop, Maarten V and Huang, Daniel Zhengyu and Qian, Elizabeth and Stuart, Andrew M},
	journal = {arXiv preprint arXiv:2203.13181},
	title = {The cost-accuracy trade-off in operator learning with neural networks},
	year = {2022}}

@article{chen1995approximation,
	author = {Chen, Tianping and Chen, Hong},
	journal = {IEEE Transactions on Neural Networks},
	number = {4},
	pages = {904--910},
	publisher = {IEEE},
	title = {Approximation capability to functions of several variables, nonlinear functionals, and operators by radial basis function neural networks},
	volume = {6},
	year = {1995}}

@article{cybenko1989approximation,
	author = {Cybenko, George},
	journal = {Mathematics of control, signals and systems},
	number = {4},
	pages = {303--314},
	publisher = {Springer},
	title = {Approximation by superpositions of a sigmoidal function},
	volume = {2},
	year = {1989}}

@article{castro2024calderon,
	author = {Castro, Javier and Mu{\~n}oz, Claudio and Valenzuela, Nicol{\'a}s},
	journal = {Vietnam Journal of Mathematics},
	pages = {1--32},
	publisher = {Springer},
	title = {The Calder{\'o}n's problem via DeepONets},
	year = {2024}}

@article{marcati2023exponential,
	author = {Marcati, Carlo and Schwab, Christoph},
	journal = {SIAM Journal on Numerical Analysis},
	number = {3},
	pages = {1513--1545},
	publisher = {SIAM},
	title = {Exponential convergence of deep operator networks for elliptic partial differential equations},
	volume = {61},
	year = {2023}}

@article{kovachki2023neural,
	author = {Kovachki, Nikola B and Li, Zongyi and Liu, Burigede and Azizzadenesheli, Kamyar and Bhattacharya, Kaushik and Stuart, Andrew M and Anandkumar, Anima},
	journal = {J. Mach. Learn. Res.},
	number = {89},
	pages = {1--97},
	title = {Neural Operator: Learning Maps Between Function Spaces With Applications to {PDEs}.},
	volume = {24},
	year = {2023}}

@article{lanthaler2023nonlocal,
	author = {Lanthaler, Samuel and Li, Zongyi and Stuart, Andrew M},
	journal = {arXiv preprint arXiv:2304.13221},
	title = {The Nonlocal Neural Operator: Universal Approximation},
	year = {2023}}

@article{lu2019deeponet,
	author = {Lu, Lu and Jin, Pengzhan and Karniadakis, George Em},
	journal = {arXiv preprint arXiv:1910.03193},
	title = {Deeponet: Learning nonlinear operators for identifying differential equations based on the universal approximation theorem of operators},
	year = {2019}}

@article{lanthaler2022error,
	author = {Lanthaler, Samuel and Mishra, Siddhartha and Karniadakis, George E},
	journal = {Transactions of Mathematics and Its Applications},
	number = {1},
	pages = {tnac001},
	publisher = {Oxford University Press},
	title = {Error estimates for deeponets: A deep learning framework in infinite dimensions},
	volume = {6},
	year = {2022}}

@article{kratsios2024mixture,
	author = {Kratsios, Anastasis and Furuya, Takashi and Lassas, Matti and de Hoop, Maarten and others},
	journal = {arXiv preprint arXiv:2404.09101},
	title = {Mixture of Experts Soften the Curse of Dimensionality in Operator Learning},
	year = {2024}}

@article{lanthaler2023curse,
	author = {Lanthaler, Samuel and Stuart, Andrew M},
	journal = {arXiv preprint arXiv:2306.15924},
	title = {The curse of dimensionality in operator learning},
	year = {2023}}

@article{li2020fourier,
	author = {Li, Zongyi and Kovachki, Nikola and Azizzadenesheli, Kamyar and Liu, Burigede and Bhattacharya, Kaushik and Stuart, Andrew and Anandkumar, Anima},
	journal = {arXiv preprint arXiv:2010.08895},
	title = {Fourier neural operator for parametric partial differential equations},
	year = {2020}}

@article{tripura2022wavelet,
	author = {Tapas Tripura and Souvik Chakraborty},
	issn = {0045-7825},
	journal = {Computer Methods in Applied Mechanics and Engineering},
	pages = {115783},
	title = {Wavelet Neural Operator for solving parametric partial differential equations in computational mechanics problems},
	volume = {404},
	year = {2023}}

@article{chen2023learning,
	author = {Chen, Gengxiang and Liu, Xu and Meng, Qinglu and Chen, Lu and Liu, Changqing and Li, Yingguang},
	journal = {arXiv preprint arXiv:2302.08166},
	title = {Learning neural operators on {R}iemannian manifolds},
	year = {2023}}

@article{kovachki2021universal,
	author = {Kovachki, Nikola and Lanthaler, Samuel and Mishra, Siddhartha},
	journal = {Journal of Machine Learning Research},
	number = {290},
	pages = {1--76},
	title = {On universal approximation and error bounds for {F}ourier neural operators},
	volume = {22},
	year = {2021}}

@article{takamoto2022pdebench,
	author = {Takamoto, Makoto and Praditia, Timothy and Leiteritz, Raphael and MacKinlay, Daniel and Alesiani, Francesco and Pfl{\"u}ger, Dirk and Niepert, Mathias},
	journal = {Advances in Neural Information Processing Systems},
	pages = {1596--1611},
	title = {Pdebench: {A}n extensive benchmark for scientific machine learning},
	volume = {35},
	year = {2022}}

@article{kovachki2024operator,
	author = {Kovachki, Nikola B and Lanthaler, Samuel and Stuart, Andrew M},
	journal = {arXiv preprint arXiv:2402.15715},
	title = {Operator learning: {A}lgorithms and analysis},
	year = {2024}}

@article{deng2021convergence,
	author = {Beichuan Deng and Yeonjong Shin and Lu Lu and Zhongqiang Zhang and George Em Karniadakis},
	issn = {0893-6080},
	journal = {Neural Networks},
	pages = {411-426},
	title = {Approximation rates of DeepONets for learning operators arising from advection--diffusion equations},
	volume = {153},
	year = {2022}}

@article{li2019better,
	author = {Li , BoTang , Shanshan and Yu , Haijun},
	issn = {1991-7120},
	journal = {Communications in Computational Physics},
	number = {2},
	pages = {379--411},
	title = {Better Approximations of High Dimensional Smooth Functions by Deep Neural Networks with Rectified Power Units},
	volume = {27},
	year = {2019}}

@article{chen1995universal,
	author = {Chen, Tianping and Chen, Hong},
	journal = {IEEE transactions on neural networks},
	number = {4},
	pages = {911--917},
	publisher = {IEEE},
	title = {Universal approximation to nonlinear operators by neural networks with arbitrary activation functions and its application to dynamical systems},
	volume = {6},
	year = {1995}}

@article{bhattacharya2021model,
	author = {Bhattacharya, Kaushik and Hosseini, Bamdad and Kovachki, Nikola B and Stuart, Andrew M},
	journal = {The SMAI journal of computational mathematics},
	pages = {121--157},
	title = {Model reduction and neural networks for parametric PDEs},
	volume = {7},
	year = {2021}}

@article{li2020neural,
	author = {Li, Zongyi and Kovachki, Nikola and Azizzadenesheli, Kamyar and Liu, Burigede and Bhattacharya, Kaushik and Stuart, Andrew and Anandkumar, Anima},
	journal = {arXiv preprint arXiv:2003.03485},
	title = {Neural operator: Graph kernel network for partial differential equations},
	year = {2020}}

@inproceedings{bonev2023spherical,
	author = {Bonev, Boris and Kurth, Thorsten and Hundt, Christian and Pathak, Jaideep and Baust, Maximilian and Kashinath, Karthik and Anandkumar, Anima},
	booktitle = {International conference on machine learning},
	organization = {PMLR},
	pages = {2806--2823},
	title = {Spherical fourier neural operators: Learning stable dynamics on the sphere},
	year = {2023}}

@article{gupta2021multiwavelet,
	author = {Gupta, Gaurav and Xiao, Xiongye and Bogdan, Paul},
	journal = {Advances in neural information processing systems},
	pages = {24048--24062},
	title = {Multiwavelet-based operator learning for differential equations},
	volume = {34},
	year = {2021}}

@article{chen2023deep,
	author = {Ke Chen and Chunmei Wang and Haizhao Yang},
	issn = {2835-8856},
	journal = {Transactions on Machine Learning Research},
	title = {Deep Operator Learning Lessens the Curse of Dimensionality for {PDE}s},
	year = {2023}}

@article{furuya2024quantitative,
	author = {Furuya, Takashi and Taniguchi, Koichi and Okuda, Satoshi},
	journal = {arXiv preprint arXiv:2410.02151},
	title = {Quantitative Approximation for Neural Operators in Nonlinear Parabolic Equations},
	year = {2024}}

@article{dHKLN25,
	author = {de Hoop, Maarten V. and Kovachki, Nikola B. and Lassas, Matti and Nelsen, Nicholas H.},
	title = {Extension and neural operator approximation of the electrical impedance tomography inverse map},
	journal = {arXiv preprint arXiv:2511.20361},
	year = {2025},
	doi = {10.48550/arXiv.2511.20361},
	url = {https://arxiv.org/abs/2511.20361}}

\end{document}